\newtheorem{theorem}{Theorem}
\newtheorem{remark}[theorem]{Remark}
\newtheorem{proposition}[theorem]{Proposition}
\newtheorem{corollary}[theorem]{Corollary}
\newtheorem{example}[theorem]{Example}
\DeclareMathOperator*{\esssup}{ess ~sup}         %
\newcommand{\N}{\mathbb{N}}
\newcommand{\R}{\mathbb{R}}
\newcommand{\RN}{\mathbb{R}^N}
\newcommand{\eps}{\varepsilon}
\newcommand{\Om}{\Omega}
\newcommand{\rand}{\partial\Omega}
\newcommand{\into}{\int_{\Omega}}
\renewcommand{\l}{\left}
\renewcommand{\r}{\right}
\numberwithin{theorem}{section}
\numberwithin{equation}{section}
\title[Implicit equations involving the $p$-Laplace operator]{Implicit equations involving the $p$-Laplace operator}
\author[G.\,Marino]{Greta Marino}
\address[G.\,Marino]{Technische Universit\"{a}t Chemnitz, Fakult\"{a}t f\"{u}r Mathematik, Reichenhainer Stra\ss{} e 41, 09126 Chemnitz, Germany}
\email{greta.marino@mathematik.tu-chemnitz.de}
\author[A.\ Paratore]{Andrea Paratore}
\address[A.\,Paratore]{Dipartimento di Matematica e Informatica, Universit\`a degli Studi di Catania, Viale A. Doria 6, I-95125 Catania, Italy}
\email{aparatore@dmi.unict.it}
\subjclass[2010]{35R70, 35J92}
\keywords{Implicit elliptic problems, differential inclusions, $p$-Laplacian}
\begin{document}

\begin{abstract}

In this work we study the existence of solutions $u \in W^{1,p}_0(\Omega)$ to the implicit elliptic problem $ f(x, u, \nabla u, \Delta_p u)= 0$ in $ \Omega $, where $ \Omega $ is  a bounded domain in $ \R^N $, $ N \ge 2 $, with smooth boundary $ \partial \Omega $, $ 1< p< \infty $, and $ f\colon \Omega \times \R \times \R^N \times \R \to \R $. We choose the particular case when the function $ f $ can be expressed in the form $ f(x, z, w, y)= \varphi(x, z, w)- \psi(y) $, where the function $ \psi $ depends only on the $p$-Laplacian $ \Delta_p u $. We also present some applications of our results.

\end{abstract}

\maketitle

\section{Introduction and main results}

Let $ \Omega \subset \R^N $, $ N \ge 2 $, be a bounded domain with smooth boundary $ \partial \Omega $, let $ 1< p< \infty $, let $Y \subseteq \R$ be a nonempty interval possibly coinciding with $\R$, and let $ f\colon \Omega \times \R \times \R^N \times \R \to \R $. In this paper, we shall consider the following implicit elliptic problem
	\begin{equation} 
	 \label{equazione}
	u \in W^{1,p}_0(\Omega), \qquad f(x, u, \nabla u, \Delta_p u)= 0 \quad \text{in} \, \, \, \Omega,
	\end{equation}
where $ \Delta_p $ denotes the $p$-Laplace operator, namely
	\[
	\Delta_p u:= \text{div}(|\nabla u|^{p-2} \nabla u) \quad \forall \, u \in W^{1,p}(\Omega).
	\]
We consider the special case  $ f(x, z, w, y)= \varphi(x, z, w)- \psi(y) $, with $ \varphi\colon  \Omega \times \R \times \R^N \to \R$  and  $ \psi \colon Y \to \R $. We require that $ \psi $ depends only on $ \Delta_p u $.  We further distinguish among the case when $ \varphi $ is a Carath\'eodory function depending on $ x, u $, and $ \nabla u $, and the case when $ \varphi $ is allowed to be highly discontinuous in each variable. In this last case, the dependence on the gradient is no more allowed.

In both situations we first reduce problem  \eqref{equazione} to an elliptic differential inclusion, but  methods used are different and  depend on the regularity of the function $ \varphi $ and on the structure of the problem. 

More precisely, in the first case we make use of a result in \cite{ricceri1} to obtain the inclusion
\begin{equation}
\label{inclusione}
-\Delta_p u \in F(x, u, \nabla u),
\end{equation}
where $ F $ is a lower semicontinuous selection of the multifunction
\[
(x, z, w) \mapsto \{y \in Y: \varphi(x, z, w)- \psi(y)= 0 \}.
\]
A function $ u \in W^{1,p}_0(\Omega) $ is called a (weak) solution to \eqref{inclusione} if  there exists $ v \in L^{p'}(\Omega) $,  $ p' $ being the conjugate exponent of $ p $, such that $ v(x) \in F(x, u(x), \nabla u(x)) $ for almost every $ x \in \Omega $ and
\[
\int_{\Omega} |\nabla u|^{p-2} \nabla u \cdot \nabla w dx= \int_{\Omega} v w dx \quad \forall \, w \in W^{1,p}_0(\Omega).
\]
We start with the general case  $ Y= \R $ and then we deduce, as a byproduct, the existence result when $ Y $ is a closed interval of $ \R $.

Existence of solutions to  \eqref{inclusione} is obtained by means of the following result, which  is based on a selection theorem for decomposable-valued multifunctions, see \cite{bart} and \cite{marano1}.

\begin{theorem}[Theorem 3.1 of \cite{marmos}]
\label{mosconi}
Let $ F \colon \Omega \times \R \times \R^N \to 2^{\R} $ be a closed-valued multifunction. Suppose that
\begin{enumerate}
\item[(h1)] $ F $ is $ \mathcal L(\Omega) \otimes \mathcal B(\R \times \R^N)$-measurable;

\item[(h2)] for almost every $  x \in \Omega $, the multifunction $ (z, w) \mapsto F(x, z, w) $ turns out to be lower semicontinuous;

\item[(h3)] there exist $ a \in L^{p'}(\Omega, \R^+_0), b, c \ge 0 $, with $ \frac{b}{\lambda_{1, p}}+ \frac{c}{\lambda_{1, p}^{1/p}}< 1$,  such that
\[
\inf_{y \in F(x, z, w)} |y|< a(x)+ b|z|^{p-1}+ c|w|^{p-1} \quad \text{in }  \Omega \times \R \times \R^N.
\]

\end{enumerate}

Then, \eqref{inclusione} has a solution $ u \in W^{1,p}_0(\Omega) $.

\end{theorem}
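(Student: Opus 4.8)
To establish Theorem~\ref{mosconi}, the plan is to reduce the multivalued problem \eqref{inclusione} to a single-valued equation by extracting a continuous selection, and then to solve that equation by a fixed point argument exploiting the monotonicity of $-\Delta_p$ and the smallness condition in (h3).

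First I would replace $F$ by the truncated multifunction
\[
\widetilde F(x,z,w):=\overline{F(x,z,w)\cap B\big(0,a(x)+b|z|^{p-1}+c|w|^{p-1}\big)}.
\]
By (h3) the set $F(x,z,w)$ meets the open ball above, so $\widetilde F$ has nonempty closed values; since $F$ is closed-valued one has $\widetilde F\subseteq F$, and every selection of $\widetilde F$ obeys the pointwise estimate $|y|\le a(x)+b|z|^{p-1}+c|w|^{p-1}$. A standard lemma on the intersection of a lower semicontinuous closed-valued multifunction with an open-ball-valued one (cf.\ \cite{bart}) shows that $\widetilde F$ inherits from (h1)--(h2) the $\mathcal L(\Omega)\otimes\mathcal B(\R\times\R^N)$-measurability and, for a.e.\ $x$, the lower semicontinuity of $(z,w)\mapsto\widetilde F(x,z,w)$. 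I then pass to the superposition multioperator
\[
\mathcal N\colon W^{1,p}_0(\Omega)\to 2^{L^{p'}(\Omega)},\qquad \mathcal N(u):=\{\,v\in L^{p'}(\Omega):\ v(x)\in\widetilde F(x,u(x),\nabla u(x))\ \text{a.e. in }\Omega\,\}.
\]
Since $a\in L^{p'}(\Omega)$, $u\in L^p(\Omega)$, $\nabla u\in L^p(\Omega;\R^N)$ and $(p-1)p'=p$, the growth estimate makes $\mathcal N(u)$ nonempty, closed and \emph{decomposable} in $L^{p'}(\Omega)$, and $\mathcal N$ maps bounded sets to bounded sets; moreover, as $u\mapsto(u,\nabla u)$ is continuous from $W^{1,p}_0(\Omega)$ into $L^p(\Omega)\times L^p(\Omega;\R^N)$, the a.e.\ lower semicontinuity of $\widetilde F$ in $(z,w)$ yields the lower semicontinuity of $\mathcal N$. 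By the selection theorem for lower semicontinuous, closed and decomposable-valued multifunctions (\cite{bart}, \cite{marano1}), $\mathcal N$ admits a continuous selection $g\colon W^{1,p}_0(\Omega)\to L^{p'}(\Omega)$, with $g(u)\in\mathcal N(u)$ for all $u$.

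It now suffices to find $u\in W^{1,p}_0(\Omega)$ solving $-\Delta_p u=g(u)$, for then $u$ solves \eqref{inclusione} with $v:=g(u)$. Since $-\Delta_p\colon W^{1,p}_0(\Omega)\to W^{-1,p'}(\Omega)$ is continuous, bounded, strictly monotone and coercive, it is a homeomorphism (Browder--Minty); composing its inverse with the compact embedding $L^{p'}(\Omega)\hookrightarrow W^{-1,p'}(\Omega)$ produces a compact continuous solution map $S$, whence $T:=S\circ g$ is a compact continuous self-map of $W^{1,p}_0(\Omega)$ whose fixed points are the sought solutions. Testing $-\Delta_p(Tu)=g(u)$ with $Tu$ and using the pointwise bound on $g(u)$, H\"older's inequality and the Poincar\'e inequality $\|u\|_p\le\lambda_{1,p}^{-1/p}\|\nabla u\|_p$, I obtain
\[
\|\nabla (Tu)\|_p^{\,p-1}\le \lambda_{1,p}^{-1/p}\,\|a\|_{p'}+\Big(\frac{b}{\lambda_{1,p}}+\frac{c}{\lambda_{1,p}^{1/p}}\Big)\|\nabla u\|_p^{\,p-1};
\]
since the bracket is $<1$ by (h3), for $R$ large the closed ball $\{\,u:\|\nabla u\|_p\le R\,\}$ is invariant under $T$, and Schauder's fixed point theorem gives the desired solution. (One could equally keep $T$ on the whole space and apply Schaefer's theorem, the a priori bound for $u=tTu$, $t\in[0,1]$, being the same inequality.)

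The genuine obstacle, I expect, is not the fixed-point part but the selection step: proving that the decomposable-valued superposition multioperator $\mathcal N$ is lower semicontinuous on $W^{1,p}_0(\Omega)$ — that is, lifting the pointwise lower semicontinuity of $\widetilde F(x,\cdot,\cdot)$ to $\mathcal N$ — and, alongside it, the truncation lemma guaranteeing that $\widetilde F$ still complies with (h1) and with a.e.\ lower semicontinuity. These are precisely the places where hypothesis (h1) and the selection and measurability results of \cite{bart}, \cite{marano1} are indispensable; once they are in hand, everything else is standard monotone-operator and fixed-point theory.
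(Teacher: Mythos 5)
The paper does not actually prove this theorem --- it is imported verbatim from Marano--Mosconi \cite{marmos}, and the argument there is essentially the one you outline: truncate $F$ using the strict bound in (h3), extract a continuous selection of the resulting decomposable-valued superposition multioperator via the selection theorems of \cite{bart} and \cite{marano1}, and conclude with a fixed point for $A_p^{-1}$ composed with the compact embedding $L^{p'}(\Omega)\hookrightarrow W^{-1,p'}(\Omega)$, the invariant ball coming from exactly the a priori inequality you derive (which is where the condition $\frac{b}{\lambda_{1,p}}+\frac{c}{\lambda_{1,p}^{1/p}}<1$ enters). Your proposal is therefore correct in outline and matches the source; the one detail to watch is that the truncation must intersect $F(x,z,w)$ with the \emph{open} ball of radius $a(x)+b|z|^{p-1}+c|w|^{p-1}$ before taking the closure, since intersecting a lower semicontinuous multifunction with a closed ball can destroy lower semicontinuity.
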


Here, $ \lambda_{1, p} $ is the first eigenvalue of the $p$-Laplacian in the space $ W^{1,p}_0 (\Omega) $. 

The following is our main result, which extends \cite[Theorem 3.2]{marano1} to the case $ p \ne 2 $.

\begin{theorem}

Let  $ \varphi \colon \Omega \times \R \times \R^N \to \R $ be a Carath\'eodory function and let $ \psi \colon \R \to \R $ be continuous. Suppose that
\begin{enumerate}

\item[(i)] $\psi $ is non-constant on intervals; 

\item[(ii)] for all $ (x, z, w)  \in \Omega \times \R \times \R^N $, the function $ y \mapsto \varphi(x, z,w)- \psi(y) $ changes sign;

\item[(iii)] there exist $ a \in L^{p'}(\Omega, \R^+_0), b, c \ge 0 $, with $ \frac{b}{\lambda_{1,p}}+ \frac{c}{\lambda_{1,p}^{1/p}}< 1 $, such that
\[
\sup \{\vert y \vert: y \in \psi^{-1}(\varphi(x, z, w)) \} < a(x)+ b|z|^{p-1}+ c|w|^{p-1}, 
\]
for all $ (x, z, w) \in \Omega \times \R \times \R^N $.

\end{enumerate}

Then, there exists $ u \in W^{1,p}_0(\Omega) $ such that
\begin{equation}
\label{equa}
\psi(-\Delta_p u)= \varphi(x, u, \nabla u) \quad  \text{in } \Omega .
\end{equation}

\end{theorem}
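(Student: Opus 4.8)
The plan is to apply Theorem~\ref{mosconi} to the multifunction
\[
F(x,z,w) := \{y \in \R : \varphi(x,z,w) - \psi(y) = 0\} = \psi^{-1}(\varphi(x,z,w)),
\]
after checking that (i)--(iii) guarantee a suitable lower semicontinuous selection and the structural hypotheses (h1)--(h3). First I would observe that by (ii) the set $F(x,z,w)$ is nonempty for every $(x,z,w)$: since $y\mapsto \varphi(x,z,w)-\psi(y)$ is continuous (as $\psi$ is continuous) and changes sign, the intermediate value theorem gives a zero. Moreover $F$ is closed-valued, being the preimage of a point under the continuous map $\psi$. The measurability condition (h1) and the lower semicontinuity in $(z,w)$ required in (h2) should follow from the fact that $\varphi$ is Carath\'eodory together with continuity of $\psi$: the composition/preimage structure $\psi^{-1}(\varphi(\cdot,\cdot,\cdot))$ inherits $\mathcal L(\Omega)\otimes\mathcal B(\R\times\R^N)$-measurability, and for a.e.\ fixed $x$ the map $(z,w)\mapsto \varphi(x,z,w)$ is continuous, so $(z,w)\mapsto \psi^{-1}(\varphi(x,z,w))$ is lower semicontinuous as a composition of a continuous function with the lower semicontinuous (indeed continuous, being closed-valued preimage) multifunction $t\mapsto\psi^{-1}(t)$. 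Hypothesis (h3) is exactly (iii), since $\inf_{y\in F(x,z,w)}|y|\le \sup\{|y|: y\in\psi^{-1}(\varphi(x,z,w))\}$.

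Thus by Theorem~\ref{mosconi} there is $u\in W^{1,p}_0(\Omega)$ and $v\in L^{p'}(\Omega)$ with $v(x)\in F(x,u(x),\nabla u(x))$ a.e.\ and $-\Delta_p u = v$ in the weak sense. It remains to translate this into \eqref{equa}. From $v(x)\in\psi^{-1}(\varphi(x,u(x),\nabla u(x)))$ we get $\psi(v(x)) = \varphi(x,u(x),\nabla u(x))$ a.e. The point that needs care is identifying $v$ with $-\Delta_p u$ in a pointwise-a.e.\ sense so that $\psi(-\Delta_p u) = \varphi(x,u,\nabla u)$ holds in $\Omega$: the weak formulation gives $\int|\nabla u|^{p-2}\nabla u\cdot\nabla w\,dx = \int v\,w\,dx$ for all test functions, which means $-\Delta_p u = v$ in the distributional sense, and since $v\in L^{p'}(\Omega)$ this identifies $\Delta_p u$ as an $L^{p'}$ function equal to $-v$ a.e. Hence $\psi(-\Delta_p u) = \psi(v) = \varphi(x,u,\nabla u)$ a.e.\ in $\Omega$, which is the assertion.

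I expect the main subtlety to be the verification of the lower semicontinuity hypothesis (h2) for the multifunction $F$, and the role of hypothesis (i): although (i) is not literally needed to apply Theorem~\ref{mosconi} (which only needs a closed-valued, measurable, l.s.c., growth-bounded multifunction), the reduction described in the introduction passes through a \emph{lower semicontinuous selection} $F$ of $(x,z,w)\mapsto\{y: \varphi(x,z,w)-\psi(y)=0\}$ obtained via \cite{ricceri1}, and the emptiness-of-interior condition (i) is precisely what is needed there to produce such a selection when $\varphi$ is merely Carath\'eodory rather than continuous. So the careful bookkeeping is: use (i)+(ii) and the result of \cite{ricceri1} to get the l.s.c.\ selection $F$, then check $F$ is closed-valued and satisfies (h1), and finally invoke (iii) for (h3). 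The remaining steps are routine. One should also note the harmless abuse that $F$ is defined on all of $\Omega\times\R\times\R^N$ rather than almost everywhere, consistent with the hypotheses being stated for all $(x,z,w)$.
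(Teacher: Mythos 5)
Your overall strategy --- reduce to the differential inclusion \eqref{inclusione} and invoke Theorem \ref{mosconi} --- is the same as the paper's, and your final paragraph correctly identifies where the real work lies. But the verification of (h2) that you actually carry out is wrong, and it is the heart of the proof. You take $F(x,z,w)=\psi^{-1}(\varphi(x,z,w))$, the \emph{full} zero set, and claim it is lower semicontinuous in $(z,w)$ because it is a composition of the continuous map $(z,w)\mapsto\varphi(x,z,w)$ with the ``lower semicontinuous (indeed continuous, being closed-valued preimage) multifunction $t\mapsto\psi^{-1}(t)$''. That multifunction is in general \emph{not} lower semicontinuous: if $\psi$ has a strict local maximum at $y_0$ with value $c$ (while also attaining $c$ transversally elsewhere, so hypotheses (i)--(ii) are not violated), then $y_0\in\psi^{-1}(c)$ but for $t$ slightly larger than $c$ there is no point of $\psi^{-1}(t)$ near $y_0$; hence no sequence $y_k\in\psi^{-1}(t_k)$ with $t_k\to c$ converges to $y_0$, and lower semicontinuity fails precisely at the local extremum points of $\psi$. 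A concrete instance covered by the theorem is $\psi(y)=y-\lambda\sin y$ with $|\lambda|>1$.

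This is exactly why the paper does not work with the full zero set but with the smaller multifunction
\[
F(x,z,w)=\{y\in\R:\ \varphi(x,z,w)-\psi(y)=0,\ y\ \text{is not a local extremum point of}\ \psi\},
\]
i.e.\ the set $Q$ of Theorem \ref{sci}: hypotheses (a), (b), (c) there are checked from (ii), (i), and the continuity of $\varphi(x,\cdot,\cdot)$ respectively, and the conclusion of Theorem \ref{sci} is that \emph{this} $F(x,\cdot,\cdot)$ is lower semicontinuous with nonempty closed values. Note also that your remark that (i) matters only because ``$\varphi$ is merely Carath\'eodory rather than continuous'' is off the mark: the obstruction lives entirely in the $y$-variable, and (i) is condition (b) of Theorem \ref{sci}, needed to keep the reduced set nonempty and lower semicontinuous. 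Since $F(x,z,w)\subseteq\psi^{-1}(\varphi(x,z,w))$, hypothesis (iii) still yields (h3), and any solution of $-\Delta_p u\in F(x,u,\nabla u)$ still satisfies \eqref{equa}; your translation of the weak solution into the pointwise identity is fine. Your last paragraph gestures at this correct route but defers it as ``routine''; as written, the argument you actually present does not establish (h2).
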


When $ \varphi $ is discontinuous we essentially follow \cite[Theorem 3.1]{marano4} to construct an appropriate upper semicontinuous multifunction $ F $ related with $\psi^{-1}$ and $\varphi$, and then we solve the elliptic differential inclusion $-\Delta_p u \in F(x,u)$ using the following

\begin{theorem}[Theorem 2.2 of \cite{marano2}]
\label{plap}
Let $ U $ be a nonempty set, let $ \Phi\colon U \to W^{1,p}_0(\Omega)$ and $ \Psi\colon U \to L^{p'}(\Omega) $ be two operators, and let $ F\colon \Omega \times \R \to 2^{\R} $ be a convex closed-valued multifunction. Suppose that
\begin{enumerate}

\item[($i_1$)] $ \Psi $ is bijective and  $ v_h \rightharpoonup v$ in $ L^{p'}(\Omega) $ implies, up to subsequences,  $ \Phi(\Psi^{-1}(v_h)) \to \Phi(\Psi^{-1}(v)) $ a.e. in $ \Omega $. Furthermore, a non-decreasing function $ g\colon \R^+_0 \to \R^+_0 \cup \{+\infty\} $ can be defined in such a way that 
\[
\|\Phi(u)\|_{\infty} \le g(\|\Psi(u)\|_{p'}) \quad \forall \, u \in U;
\]

\item[($i_2$)] $ F(\cdot \,, z) $ is measurable for all $ z \in \R $;

\item[($i_3$)] $ F(x, \cdot) $ has a closed graph for almost every $ x \in \Omega $;

\item[($i_4$)] There exists $ r> 0 $ such that the function 
\[
\rho(x):= \sup_{|z| \le g(r)} d(0, F(x,z)), \quad  x \in \Omega, 
\]
belongs to $ L^{p'}(\Omega) $ and $ \|\rho\|_{p'} \le r $.

\end{enumerate}

Then, the problem $ \Psi(u) \in F(x, \Phi(u)) $ has at least one solution $ u \in U $ satisfying $ |\Psi(u)(x)| \le \rho(x) $ for almost every $ x \in \Omega $. 

\end{theorem}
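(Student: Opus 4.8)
The plan is to recast the inclusion $\Psi(u)\in F(x,\Phi(u))$ as a fixed point problem for a suitable multifunction on the reflexive Banach space $L^{p'}(\Omega)$ (recall $1<p<\infty$, hence $1<p'<\infty$), and then to apply a Kakutani-type fixed point theorem in the weak topology. Since $L^{p'}(\Omega)$ is separable and reflexive, bounded sets are weakly compact and weakly metrizable, which makes sequential arguments legitimate. The stage for the fixed point argument is the set
\[
K:=\{v\in L^{p'}(\Omega):\ |v(x)|\le\rho(x)\text{ for a.e.\ }x\in\Omega\},
\]
with $\rho$ as in $(i_4)$: it is nonempty ($0\in K$), convex, norm-closed and bounded (by $\|\rho\|_{p'}\le r$), hence weakly compact. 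For $v\in K$ put $u_v:=\Psi^{-1}(v)$, which is well defined by the bijectivity in $(i_1)$; the growth estimate in $(i_1)$, the monotonicity of $g$ and $\|v\|_{p'}\le\|\rho\|_{p'}\le r$ give $\|\Phi(u_v)\|_\infty\le g(\|\Psi(u_v)\|_{p'})=g(\|v\|_{p'})\le g(r)$, so $|\Phi(u_v)(x)|\le g(r)$ for a.e.\ $x$. Consequently the multifunction $x\mapsto G_v(x):=F(x,\Phi(u_v)(x))$ has nonempty closed convex values, is measurable by standard superposition-measurability arguments based on $(i_2)$ and $(i_3)$, and satisfies $d(0,G_v(x))\le\rho(x)$ a.e.\ by the definition of $\rho$.

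Next I would define $N\colon K\to 2^{L^{p'}(\Omega)}$ by taking $N(v)$ to be the set of all $w\in L^{p'}(\Omega)$ such that $w(x)\in G_v(x)\cap[-\rho(x),\rho(x)]$ for a.e.\ $x$. The multifunction $x\mapsto G_v(x)\cap[-\rho(x),\rho(x)]$ is measurable, closed- and convex-valued, and nonempty (the point of $G_v(x)$ closest to the origin lies in it, since $d(0,G_v(x))\le\rho(x)$ and $G_v(x)$ is closed), so the Kuratowski--Ryll-Nardzewski selection theorem provides a measurable selection, which automatically belongs to $K$. Hence $N(v)\neq\emptyset$ and in fact $N$ maps $K$ into $2^{K}$. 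Each $N(v)$ is convex (because $G_v$ has convex values and $[-\rho(x),\rho(x)]$ is an interval) and norm-closed (pass to an a.e.\ convergent subsequence and use closedness of the values), hence weakly compact.

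It remains to show that $N$ has a weakly sequentially closed graph, and this is the heart of the argument and the step I expect to be the main obstacle. Let $v_n\in K$ with $v_n\rightharpoonup v$ and $w_n\in N(v_n)$ with $w_n\rightharpoonup w$; since $K$ is weakly closed, $v,w\in K$. By $(i_1)$, along a subsequence (not relabelled) $\phi_n:=\Phi(\Psi^{-1}(v_n))\to\phi:=\Phi(\Psi^{-1}(v))$ a.e.\ in $\Omega$, while by construction $w_n(x)\in F(x,\phi_n(x))$ and $|w_n(x)|\le\rho(x)$ a.e. Applying Mazur's lemma to the tails of $(w_n)$ and extracting a further subsequence one obtains convex combinations of the $w_n$ converging to $w$ a.e.; combining this with the convexity of the values of $F(x,\cdot)$, the closed-graph hypothesis $(i_3)$, and the a priori confinement of all $\phi_n(x)$ to $[-g(r),g(r)]$ --- i.e.\ invoking the standard convergence theorem for multifunctions --- yields $w(x)\in F(x,\phi(x))=G_v(x)$ a.e. Together with $|w(x)|\le\rho(x)$, this gives $w\in N(v)$. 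Thus $N\colon K\to 2^K$ has nonempty convex weakly compact values and a weakly closed graph on the weakly compact, weakly metrizable, convex set $K$, and the Kakutani--Fan--Glicksberg fixed point theorem produces $v\in N(v)$. Setting $u:=\Psi^{-1}(v)$ we get $\Psi(u)=v\in F(x,\Phi(u))$ a.e.\ and $|\Psi(u)(x)|=|v(x)|\le\rho(x)$ a.e., which is exactly the conclusion. The only genuinely delicate point, beyond this bookkeeping, is making the convergence theorem precise: the closed graph of $F(x,\cdot)$ must be used jointly with the convexity of its values and the $L^{p'}$-bound by $\rho$ to pass the membership relation to the weak limit, and this is where the hypotheses of the statement interlock most tightly.
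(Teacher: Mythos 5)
The paper does not prove Theorem \ref{plap}; it is quoted without proof as an external tool from \cite{marano2}, so there is no internal argument to compare against. Your proof is correct and is, in substance, the standard (and essentially the original) route: a Kakutani--Fan--Glicksberg fixed point argument for the superposition multifunction on the weakly compact convex set $K=\{v:|v(x)|\le\rho(x)\}$, with nonemptiness of $N(v)$ from the Kuratowski--Ryll-Nardzewski selection applied to the minimal-norm selection, and weak graph closedness via Mazur's lemma together with the closed-graph and convexity hypotheses. One small remark: in the convergence-theorem step the bound that actually lets you pass to the limit in the membership relation is the pointwise confinement $|w_n(x)|\le\rho(x)$ (which turns the closed graph of $F(x,\cdot)$ into upper semicontinuity near $\phi(x)$ relative to the compact interval $[-\rho(x),\rho(x)]$), rather than the bound on $\phi_n(x)$ that you emphasize; you do state the former, so this is a matter of emphasis, not a gap.
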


Extending \cite[Theorem 3.1]{marano4} to the case  $ p \ne 2 $, we obtain the following result. We denote by $ \pi_0 $ and $ \pi_1 $ the projections of $ \Omega \times \R $ on $ \Omega $ and $ \R $, respectively. 

\begin{theorem} 
\label{disc}
Let $ \mathcal{F} = \{ A \subset \Omega \times \R: A \, \text{ is measurable and there exists} \, \, \,  i \in \{0,1\} \text{ such} \\ \text{that } m(\pi_i(A))=0\} $, let $(\alpha, \beta) \subset \R $ be an interval which does not contain $ 0 $,  let $\psi\colon (\alpha, \beta) \to \R $ be continuous, let $\varphi\colon \Omega \times \R \to \R$, and let $p>N$. Suppose that

\begin{enumerate}

\item[$(i)$] $\varphi$ is $ \mathcal L (\Omega \times \R)$-measurable and essentially bounded;

\item[$(ii)$] the set $D_{\varphi} = \{ (x,z) \in \Omega \times \R: \varphi$ is discontinuous at $(x,z) \}$  belongs to $\mathcal{F} $;

\item[$(iii)$] $\varphi^{-1}(r) \setminus \operatorname{int} (\varphi^{-1}(r)) \in \mathcal{F}$ for every  $r \in \psi((\alpha, \beta))$;

\item[$(iv)$] $\overline{\varphi (S \setminus D_{\varphi})} \subseteq \psi((\alpha, \beta))$.

\end{enumerate}

Then, there exists $u \in W^{1,p}_0(\Omega) $ such that
\[
\psi(-\Delta_p u) = \varphi (x,u) \quad \text{in } \Omega.
\]

\end{theorem}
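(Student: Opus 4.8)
The plan is to reduce the implicit equation $\psi(-\Delta_p u)=\varphi(x,u)$ to an elliptic differential inclusion $-\Delta_p u \in F(x,u)$ for a suitably constructed convex closed-valued multifunction $F$, and then apply Theorem~\ref{plap} with $U=W^{1,p}_0(\Omega)$, $\Phi=\mathrm{id}$ (viewed as the compact embedding $W^{1,p}_0(\Omega)\hookrightarrow C(\overline\Omega)$, which is available since $p>N$), and $\Psi=-\Delta_p$. The inclusion must be designed so that \emph{every} solution $u$ of $-\Delta_p u\in F(x,u)$ actually satisfies $\psi(-\Delta_p u)=\varphi(x,u)$ pointwise a.e.; this is where hypotheses $(i)$--$(iv)$ enter, essentially following the construction in \cite[Theorem 3.1]{marano4}.

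First I would construct $F$. Since $0\notin(\alpha,\beta)$, say $(\alpha,\beta)\subseteq(0,+\infty)$ (the other case is symmetric), $\psi$ is defined on $(\alpha,\beta)$ and by $(iv)$ the essential range of $\varphi$ on $S\setminus D_\varphi$ lies in $\psi((\alpha,\beta))$. For $(x,z)\notin D_\varphi$ one is tempted to set $F(x,z)=\psi^{-1}(\varphi(x,z))$, but to get a \emph{convex, closed-graph} multifunction one must ``fatten'' $F$ on the bad set. The standard device is: for $(x,z)$ a point of continuity of $\varphi$ with $\varphi(x,z)=r$, let $F(x,z)$ be the closed convex hull of $\psi^{-1}(r)$ if $(x,z)\notin \varphi^{-1}(r)\setminus\mathrm{int}(\varphi^{-1}(r))$, and otherwise take a limsup-type convexification; on $D_\varphi$, again take the convexified limit of nearby values. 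Conditions $(ii)$ and $(iii)$ — that $D_\varphi$ and the ``boundary'' sets $\varphi^{-1}(r)\setminus\mathrm{int}(\varphi^{-1}(r))$ all lie in the $\sigma$-ideal $\mathcal F$ — are precisely what guarantees that for a.e.\ $x$ the ``slice'' where the fattening happens is negligible in $z$ (or a.e.\ $x$ is irrelevant), so that any selection $v\in F(x,u)$ with $v=-\Delta_p u$ satisfies, after a measure-theoretic argument using the absolute continuity of the measure $x\mapsto(x,u(x))$ pushforward against $\mathcal F$, the identity $v(x)\in\psi^{-1}(\varphi(x,u(x)))$ for a.e.\ $x$, i.e.\ $\psi(v(x))=\varphi(x,u(x))$.

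Next I would verify the four hypotheses of Theorem~\ref{plap}. For $(i_1)$: $\Psi=-\Delta_p\colon W^{1,p}_0(\Omega)\to W^{-1,p'}(\Omega)$ restricted to the range hitting $L^{p'}(\Omega)$ is a bijection onto $L^{p'}(\Omega)$ by the classical theory of the $p$-Laplacian (strict monotonicity, coercivity, Minty–Browder); if $v_h\rightharpoonup v$ in $L^{p'}(\Omega)$ then $u_h:=\Psi^{-1}(v_h)$ is bounded in $W^{1,p}_0(\Omega)$, hence along a subsequence $u_h\to u$ in $C(\overline\Omega)$ by compact embedding ($p>N$), and an $(S_+)$-type argument identifies $u=\Psi^{-1}(v)$; the bound $\|\Phi(u)\|_\infty\le g(\|\Psi(u)\|_{p'})$ follows from the a priori estimate $\|u\|_{W^{1,p}_0}\le C\|{-\Delta_p u}\|_{p'}^{1/(p-1)}$ composed with the Sobolev/Morrey embedding constant, so one takes $g(t)=C't^{1/(p-1)}$. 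Conditions $(i_2)$ and $(i_3)$ — measurability of $F(\cdot,z)$ and closed graph of $F(x,\cdot)$ — come out of the construction together with $(i)$--$(iii)$. For $(i_4)$: since $\varphi$ is essentially bounded by $(i)$ and $\psi$ is continuous on $(\alpha,\beta)$ with values whose $\psi$-preimage of the (bounded, by $(iv)$) essential range of $\varphi$ is a bounded subset of $(\alpha,\beta)$, the function $\rho(x)=\sup_{|z|\le g(r)}d(0,F(x,z))$ is bounded by a constant $M$ independent of $x$; choosing $r$ large enough that $M\,|\Omega|^{1/p'}\le r$ (possible because $g$ is nondecreasing but $M$ stays bounded as one fattens the threshold $r$) gives $\rho\in L^{p'}(\Omega)$ with $\|\rho\|_{p'}\le r$.

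\textbf{The main obstacle} I expect is not the PDE/functional-analytic verification of Theorem~\ref{plap}'s hypotheses, which is fairly mechanical for the $p$-Laplacian, but rather the measure-theoretic heart of the construction: showing that the convexified multifunction $F$ (a) has closed graph in $z$ and is measurable in $x$, and (b) is ``tight'' enough that solutions of the inclusion solve the original equation — i.e.\ that the fattening introduced on the sets in $\mathcal F$ does not actually enlarge the solution set. This requires the delicate point that for a solution $u\in W^{1,p}_0(\Omega)$, the set $\{x\in\Omega:(x,u(x))\in A\}$ has measure zero whenever $A\in\mathcal F$; when $m(\pi_1(A))=0$ this uses that $u$, being continuous and in a Sobolev space, maps null sets appropriately (or that the level sets $\{u=c\}$ for $c$ outside a null set have zero measure by the coarea-type argument), and when $m(\pi_0(A))=0$ it is immediate. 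Handling both alternatives in the definition of $\mathcal F$ uniformly, and piecing together the continuity-point analysis with the discontinuity set $D_\varphi$, is the technical crux, and is exactly where the hypothesis $p>N$ (hence $W^{1,p}_0\hookrightarrow C(\overline\Omega)$, making $x\mapsto(x,u(x))$ continuous) is used in an essential way.
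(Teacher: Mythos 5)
Your overall strategy (reduce to the inclusion $-\Delta_p u\in F(x,u)$ and invoke Theorem~\ref{plap} with $\Phi=\mathrm{id}$, $\Psi=A_p$ restricted to $U=A_p^{-1}(L^{p'}(\Omega))$, and $g(t)=Ct^{1/(p-1)}$) matches the paper, and your verification of $(i_1)$ and $(i_4)$ is essentially the paper's. But there are two genuine gaps, located exactly at the points you yourself flag as the ``crux''. First, the construction of $F$: taking $F(x,z)$ to be (a convexification of) the full preimage $\psi^{-1}(\varphi(x,z))$ cannot work, because $\overline{\mathrm{co}}\,(\psi^{-1}(r))$ in general contains points $y$ with $\psi(y)\neq r$, so a solution of the inclusion need not solve the equation. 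The paper instead first extracts a \emph{single-valued} selection of $\psi^{-1}$: it joins $y'$, $y''$ (with $\psi(y')=\min\overline{\varphi(S\setminus D_\varphi)}$, $\psi(y'')=\max\overline{\varphi(S\setminus D_\varphi)}$) by a continuous path $\lambda$ in $(\alpha,\beta)$, sets $\tilde\psi=\psi\circ\lambda$, and defines $h(r)=\min(\tilde\psi^{-1}(r)\cap[t_1,t_2])$; since $h$ is strictly increasing, the resulting function $k$ has at most \emph{countably} many discontinuities $D_k$. Only then is the single-valued composition $f=k\circ\varphi$ regularized by the essential-closure device, and the countability of $D_k$ is what lets hypothesis $(iii)$ produce a single null set $D=D_\varphi\cup\bigcup_{r\in D_k}[\varphi^{-1}(r)\setminus\mathrm{int}(\varphi^{-1}(r))]$ outside of which $F(x,z)=\{f(x,z)\}$. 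In your version the ``bad'' levels $r$ range over an a priori uncountable set, and $\mathcal F$-membership for each individual $r$ gives no control on their union.

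Second, the measure-theoretic step: for $A\in\mathcal F$ with $m(\pi_1(A))=0$ it is \emph{not} true that a Sobolev (even continuous) function $u$ satisfies $m(u^{-1}(\pi_1(A)))=0$ --- think of $u$ constant on a set of positive measure; neither ``mapping null sets appropriately'' nor a coarea argument closes this. The paper's argument is: by the Buttazzo--Dal Maso--De Giorgi lemma, $\nabla u=0$ a.e.\ on $u^{-1}(\pi_1(A))$; by Lou's theorem, $-\Delta_p u=0$ a.e.\ on $\{\nabla u=0\}$; but $-\Delta_p u$ takes values in $(\alpha,\beta)$, which does not contain $0$, so $u^{-1}(\pi_1(A))$ must be null. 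This is precisely where the hypothesis $0\notin(\alpha,\beta)$ enters, and your proposal never uses it (you invoke it only to fix a sign convention); you instead attribute the essential role to $p>N$, which only provides continuity of $u$ and the $L^\infty$ bound in $(i_1)$. You would also need to treat separately the degenerate case in which $\tilde\psi$ is constant, which the paper handles by solving $-\Delta_p u=y'$ directly and running the same null-set argument.
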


We finally point out that existence results for implicit equations involving such operators have been obtained with very different techniques by \cite{AZA, CH, HS, SHT}.

\subsection{Structure of the paper} In Section 2 we will introduce the functional analytic setting we will use throughout the work. In Section 3 we will suppose $ \varphi(x, \cdot \,, \cdot) $ to be continuous. Here we will consider some cases, according to the growth conditions on $ \varphi $ or  to the choice of the set $ Y $. We  will also give  examples where these situations apply.  In Section 4 we will consider the discontinuous framework.

\section{Preliminaries}

Let $ X $ be a topological space and let $ V \subset X $. We denote by $ \operatorname{int}(V) $ the interior of $ V $ and by $ \overline V $ the closure of $ V $. The symbol $ \mathcal B(X) $ is used to denote the Borel $ \sigma$-algebra of $ X $.

If $ (X, d) $ is a metric space, for every $ x \in X, r \ge 0 $ and every nonempty set $ V \subset X $, we define
\[
B(x, r)= \{z \in X: d(x, z) \le r\} \quad \text{and} \quad d(x, V)= \inf_{z \in V} d(x, z).
\]
Let $ X $ and $ Z $ be two nonempty sets. A multifunction $ \Phi $ from $ X $ into $ Z $ (symbolically $ \Phi \colon X \to 2^Z $) is a function from $ X $ into the family of all subsets of $ Z $. A function $ \varphi\colon X \to Z $ is said to be a selection of $ \Phi $ if $ \varphi(x) \in \Phi(x) $ for all $ x \in X $. For every set $ W \subset Z $ we define $ \Phi^-(W)= \{x \in X: \Phi(x) \cap W \ne \emptyset \} $. 

Suppose that $ (X, \mathcal A) $ is a measurable space and $ Z $ is a topological space. We say that the multifunction $ \Phi $ is measurable if for every open set $ W \subset Z $ we have $ \Phi^-(W) \in \mathcal A $. Suppose now that  $ X $ and $ Z $ are two topological spaces. We say that  $ \Phi $ is lower semicontinuous (resp. upper semicontinuous) if  for every open (resp. closed) set $ W \subset Z $ the set $ \Phi^-(W) $ is open (resp. closed) in $ X $. When $ (Z, \delta) $ is a metric space, the multifunction $ \Phi $ is lower semicontinuous if and only if, for every $ z \in Z $, the real-valued function $ x \mapsto \delta(z, \Phi(x)) $, $ x \in X $, is upper semicontinuous (see \cite[Theorem 1.1]{ricceri3}). If, moreover, $ X $ is first countable, then  $ \Phi $ is lower semicontinuous if and only if, for every $ x \in X $, every sequence $ \{x_k\} $ in $ X $ converging to $ x $ and every $ z \in \Phi(x) $, there exists a sequence $ \{z_k\} $ in $ Z $ converging to $ z $ and such that $ z_k \in \Phi(x_k) $, for all $ k \in \N $ (see \cite[Theorem 7.1.7]{klein}).

A general result on the lower semicontinuity of a multifunction is the following

\begin{theorem}[Theorem 1.1 of \cite{ricceri1}]
\label{sci}
Let $ C, D $ be two topological spaces, with $ D $ connected and locally connected, and let $ f\colon C \times D \to \R $. For all $ x \in C $ we set 
\[
\begin{split}
& V(x):= \{y \in D : f(x, y)= 0\},  \\
& M(x):= \{y \in D: y \, \, \text{is a local extremum point for} \, \, f(x, \cdot)\}, \\
 \text{and} \quad & Q(x):= V(x) \setminus M(x).
\end{split}
\] 

Suppose that 
\begin{enumerate}

\item[(a)] for all $ x \in C, f(x, \cdot) $ is continuous, and $ 0 \in \operatorname{int}(f(x, D)) $;

\item[(b)] for all $ x \in C $ and for all $ A $ open subset of $ D $, there exists $ \bar y \in A $ such that $ f(x, \bar y) \ne 0 $;

\item[(c)] the set  
	\[
	\{(y', y'') \in D \times D : \{x \in C: f(x, y')< 0< f(x, y'')\} \, \, \text{is open} \}
	\]
 is dense in $ D \times D $.

\end{enumerate}

Then, the multifunction $ Q $ is lower semicontinuous, with nonempty closed values. 

\end{theorem}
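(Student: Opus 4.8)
The plan is to reduce everything to the two open \emph{sign sets}
\[
D^+(x):=\{y\in D: f(x,y)>0\},\qquad D^-(x):=\{y\in D: f(x,y)<0\},
\]
which are open by the continuity assumed in (a). The first thing I would establish, before anything else, is the identity $Q(x)=\overline{D^+(x)}\cap\overline{D^-(x)}$. Indeed, a zero $y_0$ of $f(x,\cdot)$ fails to be a local extremum precisely when every neighborhood of $y_0$ contains points where $f(x,\cdot)$ is strictly positive and points where it is strictly negative, i.e.\ when $y_0\in\overline{D^+(x)}\cap\overline{D^-(x)}$; conversely, any point of this intersection is a zero (by continuity) that is neither a local maximum nor a local minimum. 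This identity makes \emph{closedness} of the values immediate, as $Q(x)$ is then an intersection of two closed sets.

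For \emph{nonemptiness} I would isolate the following elementary lemma: if $T$ is a connected space and $A,B\subseteq T$ are disjoint, open, nonempty, with $A\cup B$ dense in $T$, then $\overline A\cap\overline B\ne\emptyset$ (otherwise $\overline A$ and $\overline B$ would be disjoint closed sets whose union, by density, is all of $T$, contradicting connectedness). I apply it with $T=D$, $A=D^+(x)$, $B=D^-(x)$: hypothesis (a) gives $0\in\text{int}(f(x,D))$, so $f(x,\cdot)$ takes both signs and $D^\pm(x)\ne\emptyset$, while hypothesis (b) says exactly that $V(x)$ has empty interior, i.e.\ that $D^+(x)\cup D^-(x)$ is dense in $D$. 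The lemma then yields $Q(x)=\overline{D^+(x)}\cap\overline{D^-(x)}\ne\emptyset$. This is the step that consumes connectedness of $D$.

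The substantial part is \emph{lower semicontinuity}, i.e.\ that $Q^-(W)=\{x\in C: Q(x)\cap W\ne\emptyset\}$ is open for every open $W\subseteq D$. Since $D$ is locally connected it has a base of connected open sets, and because $Q^-(\bigcup_\alpha W_\alpha)=\bigcup_\alpha Q^-(W_\alpha)$ it suffices to treat \emph{connected} open $W$. Fixing such a $W$ and $x_0\in Q^-(W)$, I choose $y_0\in Q(x_0)\cap W$; then the open set $W$ meets both $D^+(x_0)$ and $D^-(x_0)$, so by continuity I can find open sets $U',U''\subseteq W$ on which $f(x_0,\cdot)$ is, respectively, strictly negative and strictly positive. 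Here is where hypothesis (c) enters, and it is the crux: the density in (c) lets me pick a pair $(\tilde y',\tilde y'')\in U'\times U''$ for which $E:=\{x\in C: f(x,\tilde y')<0<f(x,\tilde y'')\}$ is open in $C$. Since $f(x_0,\tilde y')<0<f(x_0,\tilde y'')$ we have $x_0\in E$, so $E$ is an open neighborhood of $x_0$.

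Finally I would show $E\subseteq Q^-(W)$, which closes the proof. For $x\in E$ the points $\tilde y',\tilde y''\in W$ lie in $D^-(x)$ and $D^+(x)$ respectively, so $A:=D^+(x)\cap W$ and $B:=D^-(x)\cap W$ are nonempty open subsets of $W$; moreover (b) forces $V(x)\cap W$ to have empty interior in $W$ (any nonempty relatively open subset of $V(x)$ would be open in $D$), so $A\cup B$ is dense in $W$. Applying the lemma above with $T=W$ — and here the reduction to \emph{connected} $W$, hence local connectedness, is essential — produces a point of $W$ lying in $\overline{D^+(x)}\cap\overline{D^-(x)}=Q(x)$, i.e.\ $Q(x)\cap W\ne\emptyset$. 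Thus $x_0$ is interior to $Q^-(W)$, proving lower semicontinuity. The main obstacle is precisely the orchestration of these last two paragraphs: one must upgrade the merely pointwise sign information at $x_0$ to an open condition valid on a whole neighborhood $E$ (the sole role of (c)), and then recover a genuine crossing point \emph{inside} $W$, which is impossible unless $W$ is connected.
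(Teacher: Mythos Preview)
The paper does not give its own proof of this theorem: it is quoted verbatim as \cite[Theorem~1.1]{ricceri1} and used as a black box in Section~3. There is therefore nothing in the paper to compare your argument against.

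That said, your proof stands on its own and is correct. The identification
\[
Q(x)=\overline{D^+(x)}\cap\overline{D^-(x)}
\]
is the right way to repackage the definition, and it immediately gives closed values. Your connectedness lemma (disjoint nonempty open $A,B$ with dense union in a connected $T$ force $\overline A\cap\overline B\ne\emptyset$) is exactly the mechanism that produces a genuine sign-crossing point, and you invoke it twice in the right places: once with $T=D$ for nonemptiness, and once with $T=W$ connected open for lower semicontinuity. The reduction to connected $W$ via local connectedness is clean, and the use of hypothesis~(c) to pass from sign information at a single $x_0$ to an open neighborhood $E$ is precisely how that hypothesis is meant to be spent. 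The only cosmetic point worth recording is that in the last step the lemma yields a point of $\mathrm{cl}_W(D^+(x)\cap W)\cap\mathrm{cl}_W(D^-(x)\cap W)$, i.e.\ closures taken in $W$; since closures in $W$ are contained in closures in $D$, this point lies in $\overline{D^+(x)}\cap\overline{D^-(x)}=Q(x)$ and in $W$, as you need.
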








From now on, $ \Omega $ is a bounded domain in $ \R^N $, $ N \ge 2 $, with a smooth boundary $ \partial \Omega $. 
The symbol $ \mathcal L(\Omega) $  denotes the Lebesgue $ \sigma$-algebra of $ \Omega $,  while $ m(\Omega) $ stands for the measure of $\Omega$.

Let $1 \le r< \infty$. We denote by $L^r(\Omega)$, $L^r(\Omega, \R^N)$, and $W^{1,r}(\Omega)$ the usual Lebesgue and Sobolev spaces equipped with the norms $\|\cdot\|_r$ and $\|\cdot\|_{1,r}$ given by
	\[
	\begin{split}
	\|u\|_r&= \l(\into |u|^r dx \r)^{1/r}, \quad \|\nabla u \|_r= \l(\into |\nabla u|^r dx \r)^{1/r}, \\
	\|u\|_{1, r}&= \l(\into |u|^r dx \r)^{1/r}+ \l(\into |\nabla u|^r dx \r)^{1/r}.
	\end{split}
	\]
For $r= \infty $ we recall that the norm of $L^{\infty}(\Omega)$ is given by
	\[
	\|u\|_{\infty}= \esssup_{\Omega} |u|.
	\]
Furthermore, we denote by $ W^{1,p}_0(\Omega) $ the closure of $ C^{\infty}_0(\Omega) $ in $ W^{1, p}(\Omega) $ and endow it with the norm 
\[
\|u\|:= \biggl(\int_{\Omega} |\nabla u(x)|^p dx \biggr)^{1/p}, \qquad u \in W^{1,p}_0(\Omega).\
\]
It is well known that the Sobolev embedding theorem guarantees the existence of a linear, continuous map $i\colon W^{1,p}_0(\Omega) \to L^{p^*}(\Omega)$, with the critical exponent given by
\[
p^*=
\begin{cases}
\frac{Np}{N-p} \quad & \, \text{if} \, \, \, p< N, \\
+\infty & \text{otherwise}.
\end{cases}
\]
In particular, the embedding $ W^{1,p}_0(\Omega) \hookrightarrow L^r(\Omega) $ is compact provided $ 1 \le r< p^* $.

If $ p \ne N $, then to each $ r \in [1, p^*] $ there corresponds a constant $ c_{rp}> 0 $ satisfying
\[
\|u\|_{r} \le c_{rp} \|u\|, \quad \forall \, u \in W^{1,p}_0(\Omega).
\]
On the other hand, when $ p= N $, for every $ r \in [1, \infty) $ we have
\[
\|u\|_{r} \le c_{rN} \|u\|, \quad \forall \, u \in W^{1,N}_0(\Omega).
\]
 When $ p> N $, the embedding  $W_0^{1,p}(\Omega) \hookrightarrow L^{\infty}(\Omega)$ implies the existence of a suitable $a>0$ such that
\begin{equation} 
\label{1dis}
\Vert u\Vert_{\infty} \leq a\Vert u\Vert, \quad \forall \, u \in W_0^{1,p}(\Omega),
\end{equation}
see \cite[Ch. IX]{brezis}. 

Given $ p \in  (1, \infty) $, the symbol $ p' $  denotes the conjugate exponent of $ p $ while $ W^{-1,p'}(\Omega) $ stands for the dual space of $ W^{1,p}(\Omega) $, with corresponding norm $\|\cdot\|_{-1, p'}$.  From \cite[Theorem 6.4]{brezis} we have the compact embedding  $ L^{p'}(\Omega) \hookrightarrow W^{-1,p'}(\Omega) $, and therefore there exists $ b> 0 $ such that
\begin{equation}
\label{2dis}
\|v\|_{{-1, p'}} \le b \|v\|_{p'}, \quad \forall \, v \in L^{p'}(\Omega).
\end{equation}
Let $ A_p\colon W^{1,p}_0(\Omega) \to W^{-1, p'}(\Omega) $ be the nonlinear operator stemming from the negative $p$-Laplacian, that is
\begin{equation}
\label{operator}
\langle A_p(u), v \rangle := \int_{\Omega} |\nabla u(x)|^{p-2} \nabla u(x) \cdot \nabla v(x) dx, \qquad u,v \in W^{1,p}_0(\Omega),
\end{equation}
and let $ \lambda_{1,p} $ be its first eigenvalue in $ W^{1,p}_0(\Omega) $. The following facts are well known  (see, e.g., \cite[Appendix A]{peral} or \cite{le}):

\begin{enumerate}

\item[(p$_1$)] $ A_p $ is bijective and uniformly continuous on bounded sets;

\item[(p$_2$)] the inverse operator $ A_p^{-1} $ is $ (W^{-1, p'}(\Omega), W^{1,p}_0(\Omega))$-continuous;

\item[(p$_3$)] $ \|A_p(u)\|_{-1,p'}= \|u\|_p^{p-1} $ in $ W^{1,p}_0(\Omega) $;

\item[(p$_4$)] $\displaystyle  \|u\|_p^p \le \frac{1}{\lambda_{1,p}} \|u\|^p $, for all $ u \in W^{1,p}_0(\Omega) $. 

\end{enumerate}

\section{The case when $ \varphi $ is a Carath\'eodory function}

In this section we consider the following problem: find $u \in W^{1, p}_0(\Omega)$ such that $\Delta_p u \in L^{p'}(\Omega)$ and
\begin{equation}
\label{equa}
\psi(-\Delta_p u)= \varphi(x, u, \nabla u). 
\end{equation}
We first suppose that $ Y= \R $ and state the following assumptions 

\begin{enumerate}

\item[(i)] $\psi$ is non-constant on intervals; 

\item[(ii)] for all $ (x, z, w)  \in \Omega \times \R \times \R^N $, the function $ y \mapsto \varphi(x, z,w)- \psi(y) $ changes sign.

\end{enumerate}



\begin{theorem}
\label{uno} 

Let $ \varphi \colon \Omega \times \R \times \R^N \to \R $ be a Carath\'eodory function and let $ \psi \colon \R \to \R $ be continuous. Suppose that (i)-(ii) hold true and, moreover,
\begin{enumerate}



\item[(iii)] there exist $ a \in L^{p'}(\Omega, \R^+_0), b, c \ge 0 $, with $\displaystyle  \frac{b}{\lambda_{1,p}}+ \frac{c}{\lambda_{1,p}^{1/p}}< 1 $, such that
\[
\sup \{\vert y \vert: y \in \psi^{-1}(\varphi(x, z, w)) \} < a(x)+ b|z|^{p-1}+ c|w|^{p-1}, 
\]
for all $ (x, z, w) \in \Omega \times \R \times \R^N $.

\end{enumerate}

Then, there exists a solution $ u \in W^{1,p}_0(\Omega) $ to equation \eqref{equa}.

\end{theorem}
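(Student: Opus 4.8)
The plan is to reduce equation \eqref{equa} to the elliptic differential inclusion \eqref{inclusione}, for a suitably chosen lower semicontinuous multifunction $F$, and then to invoke Theorem \ref{mosconi}. To this end, put
\[
g(x,z,w,y) := \varphi(x,z,w) - \psi(y), \qquad (x,z,w,y) \in \Omega \times \R \times \R^N \times \R,
\]
which, since $\psi$ is continuous, is a Carath\'eodory function on $\Omega \times (\R \times \R^N \times \R)$. For a.e.\ $x \in \Omega$ and every $(z,w) \in \R \times \R^N$ set $V_x(z,w) := \{y \in \R : g(x,z,w,y) = 0\} = \psi^{-1}(\varphi(x,z,w))$, and let $M \subseteq \R$ denote the set of local extrema of $\psi$, which is an $F_\sigma$ subset of $\R$ and, being also the set of local extrema of $g(x,z,w,\cdot) = \varphi(x,z,w) - \psi(\cdot)$, does not depend on $x$, $z$, $w$.

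The first step is to check that, for a.e.\ $x \in \Omega$, the function $((z,w),y) \mapsto g(x,z,w,y)$ satisfies hypotheses (a)--(c) of Theorem \ref{sci} with $C = \R \times \R^N$ and $D = \R$. The map $g(x,z,w,\cdot)$ is continuous, and by (ii) it takes both positive and negative values, so its range is an interval having $0$ in its interior; this yields (a). Condition (b) is exactly (i), since the zero set of $g(x,z,w,\cdot)$ has empty interior, hence contains no nonempty open subset of $\R$. Finally, for a.e.\ $x$ the function $\varphi(x,\cdot,\cdot)$ is continuous, so that $\{(z,w) : g(x,z,w,y') < 0 < g(x,z,w,y'')\} = \{(z,w) : \psi(y'') < \varphi(x,z,w) < \psi(y')\}$ is open for every $(y',y'') \in \R \times \R$; thus the set in (c) is all of $\R \times \R$ and (c) holds. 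Theorem \ref{sci} therefore guarantees that, for a.e.\ $x \in \Omega$, the multifunction
\[
(z,w) \mapsto F(x,z,w) := V_x(z,w) \setminus M
\]
is lower semicontinuous with nonempty closed values; on the remaining null set of $x$'s one simply sets $F(x,z,w) := \psi^{-1}(\varphi(x,z,w))$, which is nonempty by (ii) and closed because $\psi$ is continuous.

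The second step is to verify the hypotheses of Theorem \ref{mosconi} for this $F$. Condition (h2) is the lower semicontinuity just obtained. Condition (h1) follows from standard facts on measurable multifunctions: the set $\{(x,z,w,y) : g(x,z,w,y) = 0\}$ belongs to $\mathcal L(\Omega) \otimes \mathcal B(\R \times \R^N \times \R)$, so $(x,z,w) \mapsto V_x(z,w)$ is $\mathcal L(\Omega) \otimes \mathcal B(\R \times \R^N)$-measurable; removing the Borel set $M$ in the $y$-variable and modifying $F$ over the measurable null set of exceptional $x$'s does not destroy measurability. Condition (h3) is immediate from (iii): since $\emptyset \neq F(x,z,w) \subseteq \psi^{-1}(\varphi(x,z,w))$,
\[
\inf_{y \in F(x,z,w)} |y| \le \sup\{|y| : y \in \psi^{-1}(\varphi(x,z,w))\} < a(x) + b|z|^{p-1} + c|w|^{p-1}
\]
for all $(x,z,w) \in \Omega \times \R \times \R^N$, and $\frac{b}{\lambda_{1,p}} + \frac{c}{\lambda_{1,p}^{1/p}} < 1$ by assumption. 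Hence Theorem \ref{mosconi} provides $u \in W^{1,p}_0(\Omega)$ together with $v \in L^{p'}(\Omega)$ such that $v(x) \in F(x,u(x),\nabla u(x))$ for a.e.\ $x \in \Omega$ and $\int_\Omega |\nabla u|^{p-2}\nabla u \cdot \nabla w\,dx = \int_\Omega v\,w\,dx$ for every $w \in W^{1,p}_0(\Omega)$.

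It remains to deduce \eqref{equa}. The weak identity means precisely that $-\Delta_p u = v$, an equality in $L^{p'}(\Omega)$ and so a.e.\ in $\Omega$, while $v(x) \in F(x,u(x),\nabla u(x)) \subseteq \psi^{-1}(\varphi(x,u(x),\nabla u(x)))$ means exactly $\psi(v(x)) = \varphi(x,u(x),\nabla u(x))$ for a.e.\ $x \in \Omega$. Replacing $v$ by $-\Delta_p u$ gives $\psi(-\Delta_p u) = \varphi(x,u,\nabla u)$ in $\Omega$, which is \eqref{equa}. The step I expect to be the most delicate is the measurability assertion in (h1): proving that $(x,z,w) \mapsto V_x(z,w) \setminus M$ is $\mathcal L(\Omega) \otimes \mathcal B(\R \times \R^N)$-measurable requires the projection theorem for analytic sets and a little care, since $M$ is only an $F_\sigma$ set and $F$ has been altered on a null set; everything else amounts to a routine check of the hypotheses of Theorems \ref{sci} and \ref{mosconi}.
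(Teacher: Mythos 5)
Your proposal is correct and follows essentially the same route as the paper: apply Theorem \ref{sci} with $C=\R\times\R^N$, $D=\R$, $f=\varphi(x,\cdot,\cdot)-\psi$ to obtain a lower semicontinuous, nonempty closed-valued selection $F$ of the zero-set multifunction (your $V_x(z,w)\setminus M$ is exactly the paper's $F$, since the local extrema of $\varphi(x,z,w)-\psi(\cdot)$ are those of $\psi$), verify (h1)--(h3) of Theorem \ref{mosconi}, and unwind the definition of $F$ to recover \eqref{equa}. The one place you diverge is the measurability check (h1), where you appeal to graph measurability plus the projection theorem; note that this a priori only yields measurability with respect to the universal completion of $\mathcal L(\Omega)\otimes\mathcal B(\R\times\R^N)$ (that product $\sigma$-algebra is not complete), whereas the paper, following \cite[Theorem 3.2]{marano1}, writes $F^-(A)$ for open $A\subseteq\R$ directly as a union of sets of the form $\{(x,z,w):\varphi(x,z,w)\in\,]\psi(y''),\psi(y')[\,\}$, each product-measurable by \cite[Lemma III.14]{castaing} --- the cleaner way to close what you yourself flag as the delicate step.
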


\begin{proof}


Fix  $ x \in \Omega $. We want to apply Theorem \ref{sci}. To this end, we choose $ C= \R \times \R^N $, $ D= \R $, $ f(z,w,y)= \varphi(x, z, w)- \psi(y) $, and for every $ (z, w) \in \R \times \R^N $ we set
\[
\begin{split}
F(x, z, w):= \{y \in \R : \varphi(x, z, & w)- \psi(y)= 0,  \, \, \\
& \text{$ y $ is not a local extremum point of} \, \, \psi(\cdot)\}.
\end{split}
\]
Hypothesis (ii) directly yields (a). Moreover, in order to verify (b), we need to check that for all $ (z, w) \in \R \times \R^N $ the set $ U:= \{y \in \R: \varphi(x, z, w)- \psi(y) \ne 0\} $ is dense in $ \R $. Assumption (i) implies that $ \R \setminus U $ has empty interior, therefore $ U $ is dense in $ \R $, as desired. 

Let us next consider the set
	\begin{equation*}
	\begin{split}
	A:= \bigl\{(y',y'') \in \R \times \R : \{(z, w) \in \R \times \R^N : \varphi(x, &z, w)- \psi(y')< 0 \\
	&< \varphi(x, z, w)- \psi(y'')\} \, \, \text{is open}\bigr\}.
	\end{split}
	\end{equation*}
We want to show that $A$ is dense in $\R \times \R$. Suppose that there exist  $ y', y'' \in \R $ such that 
	\begin{equation}
	\label{y-primo}
	\varphi(x, z, w)- \psi(y')< 0< \varphi(x, z, w)- \psi(y''), 
	\end{equation}
that is,  $ \varphi(x, z, w) \in (\psi(y''), \psi(y')) $. Then the continuity of the function $ \varphi(x, \cdot \,, \cdot) $ implies that the set
	\begin{equation*}
	B:= \{(z, w) \in \R \times \R^N : \varphi(x, z, w)- \psi(y')< 0< \varphi(x, z, w)- \psi(y'')\}, 
	\end{equation*}
is open. If it is not possible to find such $y', y''$ that realize \eqref{y-primo}, then the set $B$ is empty. 
This implies that  $A= \R \times \R $, and then (c) follows.

Thanks to Theorem \ref{sci}, the multifunction $ F(x, \cdot \,, \cdot) $ is lower semicontinuous, with nonempty closed values. 




Moreover, thanks to  \cite[Lemma III.14]{castaing}, for all $ y', y'' \in \R $ we have
\begin{equation}
\label{measurable}
\begin{split}
&\{(x, z, w) \in \Omega \times \R \times  \R^N: \varphi(x, z, w)- \psi(y')< 0< \varphi(x, z, w)- \psi(y'')\} \\
=& \, \{(x, z, w) \in \Omega \times \R \times \R^N: \varphi(x, z, w) \in (\psi(y''), \psi(y')) \} \\
\in & \,   \mathcal L(\Omega) \otimes \mathcal B(\R \times \R^N).
\end{split}
\end{equation}
Therefore, setting $ \Lambda^*= \R \times \R $ we see that condition (iii) of \cite[Theorem 3.2]{marano1}  is satisfied. Fix now an open set $ A \subset \R  $. Arguing again as in \cite[Theorem 3.2]{marano1} we see that
	\[
	\begin{split}
	F^-(A)= \bigcup_{(y', y'') \in A \times A} \bigl\{(x, z, w) & \in \Omega \times \R \times \R^N: \\
	& \varphi(x, z, w)- \psi(y')< 0< \varphi(x, z, w)- \psi(y'')\bigr\}.
	\end{split}
	\]
Then \eqref{measurable} implies that $ F^-(A) \in \mathcal L(\Omega) \otimes \mathcal B(\R \times \R^N)$ and therefore  $ F $ is measurable.

Finally, fix any $ y \in F(x, z, w) $. By hypothesis (iii) we have
	\[
	\inf_{y \in F(x, z, w)} |y|< a(x)+ b|z|^{p-1}+ c|w|^{p-1} \quad \text{in} \, \, \, \Omega \times \R \times \R^N.
	\]
Therefore, all the hypotheses of Theorem \ref{mosconi} are satisfied, and there exists  $ u \in W^{1,p}_0(\Omega) $ such that $-\Delta_p u= F(x, u, \nabla u)$. By definition of $F$ we  then have  the result.
\end{proof}

\begin{remark}

We now discuss a very simple situation when hypothesis (iii) applies.

Suppose that  $ \varphi(\Omega \times \R \times \R^N) \subset [\alpha, \beta] $ and $ \psi $ is such that $ \psi^{-1}(B) $ is bounded, for every bounded $ B \subset \R $. If $ (x, z, w) \in \Omega \times \R \times \R^N $, we get $ \varphi(x, z, w) \in [\alpha, \beta] $, and so $ \psi^{-1}(\varphi(x, z, w)) \subset \psi^{-1}([\alpha, \beta]) $. Then, if we choose $ a \in L^{p'}(\Omega, \R^+_0) $ such that $ a(x) > \sup \{|y|: y \in \psi^{-1}([\alpha, \beta])\} $ for all $ x \in \Omega $, we have
\[
|\psi^{-1}(\varphi(x,z,w))| < a(x)  \quad \text{in }  \Omega \times \R \times \R^N,
\]
that is hypothesis (iii) with $b= c= 0$.

\end{remark}

As an application of the previous result, we consider the following 

\begin{corollary}
\label{ese-uno}

Let $ g \in L^{2}(\Omega) $ and $ \gamma \in (0, 1) $. Then, for every $ \lambda \ne 0$ and $  \mu \in \R $ there exists a solution $ u \in W^{1,2}_0(\Omega) $ to the equation
\begin{equation}
\label{eqex1}
-\Delta u= g(x) + \mu(|u|+ |\nabla u|)^{\gamma}+ \lambda \sin(-\Delta u).
\end{equation}

\end{corollary}

\begin{proof}

Fix $ \lambda \ne 0 $ and $ \mu \in \R $.  For every $ (x,z,w) \in \Omega \times \R \times \R^N  $ and every $ y \in \R $ we set
\[
\varphi(x,z,w):= g(x)+ \mu(|z|+ |w|)^{\gamma}  \quad \text{as well as} \quad \psi(y):= y- \lambda \sin y.
\]
Since $ \lim_{y \to \pm \infty} (y- \lambda \sin y)= \pm \infty $,  the function $ y \mapsto \varphi(x, z, w)- \psi(y) $  changes sign, and then hypothesis (ii) follows. Moreover, $\psi$ vanishes only at points of $ \R $ and not in intervals,  which implies that also hypothesis (i) is satisfied. 

Fix now $ (x, z, w) \in \Omega \times \R \times \R^N $. In order to verify hypothesis (iii), we want to find  $ b, c\ge 0 $, with $ \displaystyle \frac{b}{\lambda_{1,2}}+ \frac{c}{\lambda_{1,2}^{1/2}}< 1 $, and $ a \in L^2(\Omega, \R^+_0) $ such that
\begin{equation}
\label{max}
\max \bigl\{\vert y \vert: y \in \psi^{-1}(\varphi(x, z, w)) \bigr\}< a(x)+ b\vert z \vert+ c\vert w \vert,
\end{equation}
or equivalently $ \vert y \vert< a(x)+ b\vert z \vert+ c\vert w \vert $ for every $ y $ solution  to the equation 
	\begin{equation}
	\label{eqesempio}
	\psi(y)= \varphi(x, z, w).
	\end{equation}
We point out that in \eqref{max} the maximum replaces the supremum because the set  $\psi^{-1}(\varphi(x, z, w))$ is compact. Let $\tilde y$ be a solution to \eqref{eqesempio}. Then Young's inequality with exponents $ 1/{\gamma} $ and $ 1/(1- \gamma) $ gives
	\begin{equation}
	\label{young}
	\begin{split}
	|\psi(\tilde y)|= |\varphi(x, z,w)| &= |g(x)+ \mu(|z|+ |w|)^{\gamma}| \\
	&\le |g(x)|+  |\mu| |z|^{\gamma}+  |\mu| |w|^{\gamma} \\
	& \le |g(x)|+ \eps |z|+ \eps |w|+ C_{\gamma, \eps, \mu} \\
	& \le \tilde g(x)+ \eps |z|+ \eps |w|,
	\end{split}
	\end{equation}
where $ \tilde g(x):= |g(x)| + C_{\gamma, \eps, \mu} $ for every $ x \in \Omega $. 
On the other hand, by the definition of $ \psi $ we have
\[
\vert \psi(\tilde y) \vert= \vert \tilde y- \lambda \sin \tilde y \vert \ge \vert \tilde y \vert- \vert \lambda \vert,
\]
and then \eqref{young} gives 
\[
\begin{split}
\vert \tilde y \vert & \le \vert \psi(\tilde y) \vert+ \vert \lambda \vert \\
&\le \tilde g(x)+ \vert \lambda \vert+ \eps |z|+ \eps |w| \\
& < \bar g(x)+ \eps \vert z \vert+ \eps \vert w \vert,
\end{split}
\]
where $ \bar g(x):=  \tilde g(x) + 2 \vert \lambda \vert $, for every $ x \in \Omega $. Observe that $ \bar g \in L^2(\Omega, \R^+_0) $. If we choose $ \eps $ in such a way that
\[
\frac{\eps}{\lambda_{1,2}}+ \frac{\eps}{\lambda_{1,2}^{1/2}}< 1,
\]
then hypothesis (iii) is satisfied  with $ a:= \bar g $ and $ b:= c:= \eps $. Thanks to Theorem \ref{uno}, there exists a solution $ u \in W^{1,2}_0(\Omega) $ to  equation \eqref{eqex1}.

\end{proof}

In the following situation the function $ \psi $ exhibits a very different behavior.

\begin{corollary}

Let $ p \in [2, +\infty) $, $ f \in L^{p'}(\Omega) $, and  $ \gamma \in  (0, p-1) $. Then, for every $ \mu \in \R $ and $ \lambda> 0 $, there exists a solution $ u \in W^{1,p}_0(\Omega) $ to the equation
\begin{equation}
\label{eq}
-\Delta_p u= f(x) + \mu (|u|+ |\nabla u|)^{\gamma}- \lambda e^{-\Delta_p u}.
\end{equation}

\end{corollary}

\begin{proof}

Fix $ \mu \in \R $ and $ \lambda>0 $. As before, for every $ (x, z, w) \in \Omega \times \R \times \R^N $ and every $ y \in \R $ we set
\[
\varphi(x, z, w):= f(x)+ \mu (|z|+ |w|)^{\gamma}  \quad \text{as well as} \quad \psi(y):= y+ \lambda e^y.
\]
Since $ \lim_{y \to \pm \infty} (y+ \lambda e^y)= \pm \infty $, then hypotheses (i) and (ii)  are fulfilled. In order to verify hypothesis (iii), we argue as in Corollary \ref{ese-uno}.  Let $ \tilde y $ be a solution to $ \varphi(x, z, w)- \psi(y)= 0 $, then  Young's inequality  with exponents $ \displaystyle  \frac{p-1}{\gamma}, \frac{p-1}{p-1- \gamma} > 1 $ gives
	\[
	\begin{split}
	|\psi(\tilde y)|= |\varphi(x, z, w)| &= |f(x)+ \mu (|z|+ |w|)^{\gamma}| \\
	& \le |f(x)|+ 2^{\gamma}(|\mu| |z|^{\gamma}+ |\mu| |w|^{\gamma}) \\
	& \le |f(x)|+ \eps |z|^{p-1}+ \eps  |w|^{p-1}+ C_{\gamma, \eps, \mu} \\
	&= \tilde f(x)+ \eps |z|^{p-1}+ \eps  |w|^{p-1}, 
	\end{split}
	\]
where $ \tilde f(x):= |f(x)| + C_{\gamma, \eps, \mu} $ for every $ x \in \Omega $.

On the other hand we have
	\begin{equation}
	\label{xi}
	\vert \psi(\tilde y) \vert= \vert \tilde y+ \lambda e^{\tilde y} \vert \ge \vert \tilde y \vert- \vert \xi \vert,
	\end{equation}
$ \xi \not\equiv 0$ being the unique solution to the equation $ y+ \lambda e^{y}= 0 $. Let us show \eqref{xi} for a general $ y \in \R $. If $ y \ge \xi $ we have
	\[
	\begin{split}
	\vert y+ \lambda e^y \vert &= \vert y+ \lambda e^y- \xi- \lambda e^{\xi} \vert \\
	&= \vert y- \xi+ \lambda(e^y- e^{\xi})\vert \\
	& \ge \vert y- \xi \vert \ge \vert y \vert- \vert \xi \vert.
	\end{split}
	\]
Suppose now that $ y< \xi $, then
\[
\begin{split}
\vert y+ \lambda e^y \vert&= \vert y-	\xi+ \lambda (e^y- e^{\xi}) \vert \\
&= \vert \xi- y+  \lambda(e^{\xi}- e^y) \vert \\
& \ge \vert \xi- y \vert \ge \vert y \vert- \vert \xi \vert.
\end{split}
\]
From \eqref{xi} we then have
\[
\begin{split}
\vert \tilde y \vert & \le \vert \psi(\tilde y) \vert+ \vert \xi \vert \\
&\le \tilde f(x)+ \eps |z|^{p-1}+ \eps  |w|^{p-1} + \vert \xi \vert \\
& < \bar f(x)+ \eps |z|^{p-1}+ \eps  |w|^{p-1},
\end{split}
\]
with $ \bar f(x):=  \tilde f(x) + 2 \vert \xi \vert $ for every $ x \in \Omega $. Observe that $ \bar f \in L^{p'}(\Omega, \R^+_0) $. Then, if we choose $ \eps $ in such a way that
\[
\frac{\eps}{\lambda_{1,p}}+ \frac{\eps}{\lambda_{1,p}^{1/p}}< 1,
\]
hypothesis (iii) is satisfied with $ a:= \bar f $ and $ b:= c:= \eps $. Therefore, Theorem \ref{uno} gives the existence of  a solution $ u \in W^{1,p}_0(\Omega) $ to equation \eqref{eq}.
\end{proof}

In order to state our next theorem, we need some preliminary results. The following is an a priori estimate on $\|\nabla u\|_{L^{\infty}(\Omega; \R^N)}$, see \cite[Proposition 3.3]{marmos} or \cite[Theorem 1.3]{cianchi}.

\begin{proposition} \label{prop-cianchi}

Suppose $q> N$. Then, there exists a constant $\hat C>0$, depending on $p, q$, and $\Om$, such that
	\[
	\|\nabla u\|_{L^{\infty}(\Omega, \R^N)} \le \hat C \|\Delta_p u\|_{L^q(\Omega)}^{1/(p-1)}.
	\]
\end{proposition}

Proposition \ref{prop-cianchi} is used in the proof of the following

\begin{theorem}
\label{thm-mosconi}

Let $p \in (1, \infty)$, $q>N$, and let $F\colon \Om \times \R \times \RN \to 2^{\R}$ be a closed-valued multifunction. Suppose that

\begin{itemize}

\item[(h$_1$)] $F$ is $\mathcal L(\Om) \otimes \mathcal B(\R \times \RN)$-measurable;

\item[(h$_2$)] for almost every $x \in \Om$ the multifunction $(z, w) \mapsto F(x, z, w)$ turns out to be lower semicontinuous;

\item[(h$_3$)] for appropriate $a \in L^q(\Om, \R_0^+)$ and $\xi\colon \R_0^+ \times \R_0^+ \to  \R_0^+$ nondecreasing with respect to each variable separately one has
	\[
	\inf_{y \in F(x, z, w)} |y|< a(x)+ \xi(|z|, |w|) \quad \text{in } \Om \times \R \times \RN;
	\]

\item[(h$_4$)] there exists $R>0$ such that
	\[
	\|a\|_q+ m(\Om)^{1/q} \xi(\delta_{\Om} \hat C R^{1/(p-1)}, \hat C R^{1/(p-1)}) \le R,
	\]
where $\delta_{\Om}:= \operatorname{diam}(\Om)$ and $\hat C$ is given by Proposition \ref{prop-cianchi}. 

\end{itemize}

Then, there exists at least one solution $u \in W^{1,p}_0(\Om)$ to problem
	\[
	\begin{aligned}
	-\Delta_p u& \in F(x, u, \nabla u) \quad && \text{in } \Om, \\
	u&= 0 && \text{on } \rand.
	\end{aligned}
	\]

\end{theorem}

Finally, we state our result. 

\begin{theorem}
\label{due}

Let $ \varphi $ and $ \psi $ as in Theorem \ref{uno}. Suppose that hypotheses (i)-(ii) hold true and, moreover,
\begin{enumerate}
\item[(iii)$'$]  there exist $ a \in L^q(\Omega, \R^+_0) $, $ q> N $, $ g\colon \R^+_0 \times \R^+_0 \to \R^+_0 $ nondecreasing with respect to each variable separately, such that
\[
\sup \{\vert y \vert: y \in \psi^{-1}(\varphi(x, z, w)) \} < a(x)+ g(|z|, |w|), 
\]
for all $ (x, z, w) \in \Omega \times \R \times \R^N $;

\item[(iv)] there exists $ R> 0 $ such that
\[
\|a\|_{L^q(\Omega)}+ m(\Omega)^{1/q} g(\delta_{\Omega} \hat C R^{1/(p-1)}, \hat C R^{1/(p-1)}) \le R,
\]
where  $\hat C$ comes from Proposition \ref{prop-cianchi}.
\end{enumerate}
Then, equation \eqref{equa} has a solution $ u \in W^{1,p}_0(\Omega) $.

\end{theorem}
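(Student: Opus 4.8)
The plan is to run the proof of Theorem~\ref{uno} verbatim up to the reduction to the differential inclusion, and only at the very last step replace the appeal to Theorem~\ref{mosconi} by a version adapted to the $L^q$-growth condition (iii)$'$ and the compatibility condition (iv). Concretely, fix $x\in\Omega$ and, exactly as before, set
\[
F(x,z,w):=\{y\in\R:\varphi(x,z,w)-\psi(y)=0,\ y\text{ is not a local extremum point of }\psi\}.
\]
Since hypotheses (i)--(ii) are in force here as well, the verification through Theorem~\ref{sci} that $F(x,\cdot,\cdot)$ is lower semicontinuous with nonempty closed values, and the verification that $F$ is $\mathcal L(\Omega)\otimes\mathcal B(\R\times\R^N)$-measurable, are word for word those of Theorem~\ref{uno} (neither used (iii)). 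So we are again reduced to finding a weak solution $u\in W^{1,p}_0(\Omega)$ of $-\Delta_p u\in F(x,u,\nabla u)$; by the definition of $F$ any such $u$ automatically satisfies $\psi(-\Delta_p u)=\varphi(x,u,\nabla u)$, which is \eqref{equa}.

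For this existence step I would argue by a continuous-selection-plus-Schauder scheme (the $L^q$ counterpart of \cite[Theorem 3.1]{marmos}). Let $\mathcal K:=\{u\in W^{1,p}_0(\Omega)\cap C^1(\overline\Omega):\|u\|_\infty\le\delta_\Omega\hat C R^{1/(p-1)},\ \|\nabla u\|_\infty\le\hat C R^{1/(p-1)}\}$, a convex, closed, bounded subset of $C^1(\overline\Omega)$, with $\hat C$, $R$ as in (iv). For $v\in L^q(\Omega)$, $q>N$, with $\|v\|_{L^q(\Omega)}\le R$, property (p$_1$) and \eqref{2dis} give a unique $u_v:=A_p^{-1}(v)\in W^{1,p}_0(\Omega)$, and \cite[Proposition 3.3]{marmos} yields $u_v\in C^1(\overline\Omega)$ with $\|\nabla u_v\|_\infty\le\hat C\|v\|_{L^q}^{1/(p-1)}\le\hat C R^{1/(p-1)}$ and $\|u_v\|_\infty\le\delta_\Omega\hat C R^{1/(p-1)}$, i.e. $u_v\in\mathcal K$. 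For $u\in\mathcal K$ put $N_F(u):=\{w\in L^q(\Omega):w(x)\in F(x,u(x),\nabla u(x))\text{ for a.e. }x\in\Omega\}$; measurability of $F$ makes $N_F(u)$ a nonempty closed decomposable set, and (iii)$'$ together with the monotonicity of $g$ and (iv) force $\|w\|_{L^q(\Omega)}\le\|a\|_{L^q(\Omega)}+m(\Omega)^{1/q}g(\delta_\Omega\hat C R^{1/(p-1)},\hat C R^{1/(p-1)})\le R$ for every $w\in N_F(u)$. Lower semicontinuity of $F(x,\cdot,\cdot)$ then gives lower semicontinuity of $u\mapsto N_F(u)$ from $\mathcal K$ (with the $C^1$ topology) into $L^q(\Omega)$, so by the selection theorem for decomposable-valued multifunctions (\cite{bart}) there is a continuous $\sigma\colon\mathcal K\to L^q(\Omega)$ with $\sigma(u)\in N_F(u)$ and $\|\sigma(u)\|_{L^q}\le R$. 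The map $T:=A_p^{-1}\circ\sigma\colon\mathcal K\to\mathcal K$ is continuous and, by the uniform $C^{1,\alpha}$ bound of \cite[Proposition 3.3]{marmos} and Ascoli--Arzel\`a, compact; Schauder's fixed point theorem then produces $u\in\mathcal K$ with $u=A_p^{-1}(\sigma(u))$, that is $-\Delta_p u=\sigma(u)\in F(x,u,\nabla u)$ a.e.\ in $\Omega$, which is the sought solution.

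I expect the genuinely delicate points to be two. First, the continuity of $v\mapsto A_p^{-1}(v)$ as a map from $L^q(\Omega)$ into $C^1(\overline\Omega)$: this follows by combining the $(W^{-1,p'}(\Omega),W^{1,p}_0(\Omega))$-continuity from (p$_2$), the embedding \eqref{2dis}, and the a priori $C^{1,\alpha}$ estimate of \cite[Proposition 3.3]{marmos} with Ascoli--Arzel\`a, but it has to be stated carefully. Second, the passage from lower semicontinuity of the superposition multifunction $N_F$ to a continuous single-valued selection, where one must check that $N_F$ is decomposable-valued and that the selection theorem applies on the separable metric space $\mathcal K$; here the whole point of (iii)$'$--(iv) is precisely to make the ball $\mathcal K$ invariant under $T$ and to provide the needed $L^q$-bound, so no extra eigenvalue smallness (as in (h3) of Theorem~\ref{mosconi}) is required. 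All remaining verifications are routine and parallel the proof of Theorem~\ref{uno}.
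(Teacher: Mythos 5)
Your proposal is correct and, for the part of the argument the paper actually carries out, follows the same route: you introduce the same selection multifunction $F(x,z,w)=\{y\in\R:\varphi(x,z,w)-\psi(y)=0,\ y\ \text{not a local extremum point of}\ \psi\}$, transfer lower semicontinuity, nonemptiness and closedness of the values, and joint measurability verbatim from Theorem \ref{uno} (correctly noting that none of these used the growth condition), observe that (iii)$'$ gives the pointwise bound $\inf_{y\in F(x,z,w)}|y|<a(x)+g(|z|,|w|)$, and reduce the problem to the inclusion $-\Delta_p u\in F(x,u,\nabla u)$. The only divergence is at the final existence step: the paper simply checks that (iii)$'$ and (iv) match the hypotheses of \cite[Theorem 3.4]{marmos} and invokes that result as a black box, whereas you reconstruct its proof --- invariance of the $C^1$-ball $\mathcal K$ determined by the a priori bounds $\|u\|_\infty\le\delta_\Omega\hat C R^{1/(p-1)}$ and $\|\nabla u\|_\infty\le\hat C R^{1/(p-1)}$ from \cite[Proposition 3.3]{marmos}, a Fryszkowski-type continuous selection of the decomposable-valued Nemytskii multifunction $N_F$, and Schauder's fixed point theorem applied to $T=A_p^{-1}\circ\sigma$. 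This is exactly the mechanism behind the cited theorem, and you identify correctly that (iv) is precisely the self-mapping condition for $T$ and that no eigenvalue smallness as in (h3) of Theorem \ref{mosconi} is needed here. The two points you flag as delicate --- continuity of $A_p^{-1}\colon L^q(\Omega)\to C^1(\overline\Omega)$ via (p$_2$), \eqref{2dis} and the uniform $C^{1,\alpha}$ estimate together with Ascoli--Arzel\`a, and the lower semicontinuity and decomposability of $u\mapsto N_F(u)$ needed for the selection theorem --- are precisely the content of \cite{marmos}; your treatment of them is consistent with the standard arguments, so nothing is wrong, but in a write-up you could replace the whole second half by the citation the paper uses.
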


\begin{proof}

We aim to apply Theorem \ref{thm-mosconi}. As before, fix $ x \in \Omega $ and for all $ (z, w) \in \R \times \R^N $ define 
\[
\begin{split}
F(x, z, w):= \{y \in \R : & \, \, \varphi(x, z,w)-  \psi(y)= 0, \\
& y \, \, \text{is not a local extremum point of} \, \ \psi(\cdot)\}.
\end{split}
\]
Reasoning as in Theorem \ref{uno} ensures that $ F $ has nonempty  closed values, is lower semicontinuous w.r.t. $ (z, w) $, and $ \mathcal L(\Omega) \otimes \mathcal B(\R \times \R^N) $-measurable. 

Fix now  $ y \in F(x, z, w) $, that is  $ y \in \psi^{-1}(\varphi(x, z, w)) $. Then hypothesis (iii)$'$ implies that
\[
\inf_{y \in F(x, z, w)} |y|< a(x)+ g(|z|, |w|) \quad \text{in } \Omega \times \R \times \R^N.
\]
Taking into account (iv), we see that all the hypotheses of Theorem \ref{thm-mosconi} are fulfilled. Therefore, there exists $ u \in W^{1,p}_0(\Omega) $ such that $ -\Delta_p u \in F(x, u, \nabla u) $. According to the definition of $ F $, it turns out that $ u $ is a solution to equation \eqref{equa}.
\end{proof}

The following result is an application of the previous theorem and has been inspired by \cite[Corollary 1]{kacz}. Observe that, unlike \cite{kacz}, here we consider a function $ \varphi $ which is not necessarily continuous w.r.t. the variable $ x $, but only lies in a suitable $ L^q(\Omega) $. Moreover, here we deal with partial differential equations. 


\begin{corollary}

Let $ h \in L^q(\Omega) $, with $ q> N $. Then, for every $ k \ne 0 $ and every sufficiently small $ \Vert h \Vert_q $ there exists a solution $ u \in W^{1,2}_0(\Omega) $ to the equation
\begin{equation*}
\label{laplacian}
-\Delta u= h(x) + u^3+ \vert \nabla u \vert^2+ k \sin (-\Delta u).
\end{equation*}

\end{corollary}

\begin{proof}

Fix $ k \in \R $ and for all $ (x, z, w) \in \Omega \times \R \times \R^N$ and all $ y \in \R $ define 
\[
\varphi(x, z, w):= h(x)+ z^3+ \vert w \vert^2 \quad \text{as well as} \quad \psi(y):= y- k \sin y. 
\]
Reasoning like in Corollary \ref{ese-uno} gives that hypotheses (i)-(ii) are fulfilled. 

In order to verify hypothesis (iii)$'$, let $ g(|z|, |w|):= |z|^3+ |w|^2 $ for all $ (z, w) \in \R \times \R^N $.  It turns out that $  g\colon \R^+_0 \times \R^+_0 \to \R^+_0  $ is nondecreasing w.r.t. each variable, separately. Let  $ \tilde y $ be a solution to the equation $ \psi(y)= \varphi(x, z, w) $. It follows that
\begin{equation*}
\label{eseh}
\begin{split}
\vert \psi(\tilde y) \vert &= |\varphi(x, z, w)| \\
& \le |h(x)|+ |z|^3+ |w|^2 \\
&= \vert h(x) \vert+ g(\vert z \vert, \vert w \vert).
\end{split}
\end{equation*}
On the other hand, since  $ \vert \psi(\tilde y) \vert= \vert \tilde y- k \sin \tilde y \vert \ge \vert \tilde y \vert- \vert k \vert $, then we have
\[
\begin{split}
\vert \tilde y \vert & \le \vert \psi(\tilde y) \vert+ \vert k \vert \\
& \le \vert h(x) \vert+ g(\vert z \vert, \vert w \vert)+ \vert k \vert \\
& < \bar h(x)+ g(\vert z \vert, \vert w \vert),
\end{split}
\]
where $ \bar h(x):= |h(x)|+ 2 \vert k \vert $ for every $ x \in \Omega $ and $ \bar h \in L^q(\Omega, \R^+_0) $.  Hence  hypothesis (iii)$'$ follows.

In order to verify hypothesis (iv), we have to check the existence of $ R> 0 $ such that
\begin{equation}
\label{diseg}
\|\bar h\|_{L^q(\Omega)}+ m(\Omega)^{1/q} \delta_{\Omega}^3 \hat C^3 R^3+ m(\Omega)^{1/q} \hat C^2 R^2 \le R.
\end{equation}
If $ 0< R <\!<1 $, then choosing $ \bar h $ in such a way that $ \| \bar h\|_{L^q(\Omega)}< \frac{R}{2} $ gives  immediately \eqref{diseg}, since the terms containing $ R^2 $ and $ R^3 $ are negligible with respect to $ R $.  Therefore, all the hypotheses of Theorem \ref{due} are fulfilled, and we have the thesis. 
\end{proof}

The next result provides solutions to equation \eqref{equa} when the function $ \psi $ is of the form $ y \mapsto y- h(y)$, with $ h $ continuous and bounded. Note that here  a specific growth condition on $ \varphi $ is required.

\begin{theorem}

Let $ \varphi\colon \Omega \times \R \times \R^N \to \R $ be a Carath\'eodory function and let $ h \in L^{\infty}(\R) $ be continuous. Suppose that (i)-(ii) hold true and, moreover,
\begin{enumerate}

\item[(iii)$''$] there exist $ f \in L^{p'}(\Omega, \R^+_0)$, with $ f(x) \ge \|h\|_{\infty} $ for all $ x \in \Omega $, $ \mu> 0 $, and $ \gamma \in (0, p-1) $ such that
\[
\sup_{(x, z, w) \in \Omega \times \R \times \R^N} |\varphi(x, z, w)| < f(x)+ \mu(|z|+ |w|)^{\gamma}.
\]
\end{enumerate}
Then, there exists a solution $ u \in W^{1,p}_0(\Omega) $ to the equation
\begin{equation}
\label{h}
-\Delta_p u- h(-\Delta_p u)= \varphi(x, u, \nabla u).
\end{equation}

\end{theorem}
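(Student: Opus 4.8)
The plan is to recast \eqref{h} in the form \eqref{equa} and invoke Theorem~\ref{uno}. Set $\psi(y):=y-h(y)$ for $y\in\R$; this is continuous on $\R$ because $h$ is, and with this choice equation \eqref{h} reads exactly $\psi(-\Delta_p u)=\varphi(x,u,\nabla u)$. Since hypotheses (i) and (ii) are assumed outright, the only thing left to verify before applying Theorem~\ref{uno} is its growth condition (iii), namely the existence of $a\in L^{p'}(\Omega,\R^+_0)$ and $b,c\ge 0$ with $\frac{b}{\lambda_{1,p}}+\frac{c}{\lambda_{1,p}^{1/p}}<1$ such that
\[
\sup\{|y|:y\in\psi^{-1}(\varphi(x,z,w))\}<a(x)+b|z|^{p-1}+c|w|^{p-1}\qquad\text{in }\Omega\times\R\times\R^N.
\]

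To check this, I would fix $(x,z,w)$ and take an arbitrary $\tilde y\in\psi^{-1}(\varphi(x,z,w))$, so that $\tilde y=\varphi(x,z,w)+h(\tilde y)$ and hence $|\tilde y|\le|\varphi(x,z,w)|+\|h\|_{\infty}$. Combining this with (iii)$''$ and using $f(x)\ge\|h\|_{\infty}$ yields $|\tilde y|<2f(x)+\mu(|z|+|w|)^{\gamma}$. Then, since $\gamma\in\,]0,p-1[$, Young's inequality with the conjugate exponents $\frac{p-1}{\gamma}$ and $\frac{p-1}{p-1-\gamma}$ (both larger than $1$) gives, for every $\eps>0$, a constant $C_{\gamma,\eps,\mu}$ with $\mu(|z|+|w|)^{\gamma}\le\eps|z|^{p-1}+\eps|w|^{p-1}+C_{\gamma,\eps,\mu}$. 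Setting $\bar f(x):=2f(x)+C_{\gamma,\eps,\mu}$, which belongs to $L^{p'}(\Omega,\R^+_0)$ because $\Omega$ is bounded, we obtain $|\tilde y|<\bar f(x)+\eps|z|^{p-1}+\eps|w|^{p-1}$ for every $\tilde y\in\psi^{-1}(\varphi(x,z,w))$; since $\psi^{-1}(\varphi(x,z,w))$ is compact (it is closed, being the fiber of a continuous function, and bounded by the estimate just obtained), the supremum is attained and therefore satisfies the same strict bound.

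It then remains to fix $\eps>0$ small enough that $\frac{\eps}{\lambda_{1,p}}+\frac{\eps}{\lambda_{1,p}^{1/p}}<1$ and to apply Theorem~\ref{uno} with $a:=\bar f$ and $b:=c:=\eps$; the function $u\in W^{1,p}_0(\Omega)$ it produces satisfies $\psi(-\Delta_p u)=\varphi(x,u,\nabla u)$, i.e.\ \eqref{h}. I do not expect a genuine obstacle here: all the substantive analysis is already packaged in Theorem~\ref{uno} (and, through it, in Theorems~\ref{sci} and \ref{mosconi}), and the only delicate point is the absorption step, where the restriction $\gamma<p-1$ in (iii)$''$ is precisely what allows the coefficients $b,c$ to be taken as small as the spectral condition $\frac{b}{\lambda_{1,p}}+\frac{c}{\lambda_{1,p}^{1/p}}<1$ demands. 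One may also observe, in passing, that since $h$ is bounded we have $\psi(y)\to\pm\infty$ as $y\to\pm\infty$, so hypothesis (ii) in fact holds automatically for this particular $\psi$.
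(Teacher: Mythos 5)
Your proposal is correct and follows essentially the same route as the paper: the key step in both is the estimate $|y|\le|y-h(y)|+|h(y)|\le|\varphi(x,z,w)|+\|h\|_{\infty}$ followed by Young's inequality with exponents $\frac{p-1}{\gamma}$ and $\frac{p-1}{p-1-\gamma}$ to absorb $\mu(|z|+|w|)^{\gamma}$ into $\eps(|z|^{p-1}+|w|^{p-1})$ plus a constant. The only cosmetic difference is that the paper re-runs the construction of the selection multifunction $F$ and applies Theorem~\ref{mosconi} directly (splitting the cases $\gamma\ge 1$ and $\gamma<1$), whereas you invoke Theorem~\ref{uno} as a black box with $\psi(y)=y-h(y)$ and handle the Young step uniformly, which is a legitimate streamlining.
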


\begin{proof}

We fix $ x \in \Omega $ and for all $ (z, w) \in \R \times \R^N $ define 
\[
\begin{split}
F(x, z, w):= \{y \in \R : \, \, & \varphi(x, z,w)- (y- h(y))= 0, \\
& y \, \, \text{is not a local extremum point of} \,  \, y \mapsto y- h(y)\}.
\end{split}
\]
Reasoning as in the above proofs ensures that $ F $ is lower semicontinuous w.r.t. $ (z, w) $, $ \mathcal L(\Omega) \otimes \mathcal B(\R \times \R^N) $-measurable, and has nonempty,  closed values.

Fix  $ (x, z, w) \in \Omega \times \R \times \R^N $. If $ y \in F(x, z, w) $, then it solves the equation $ \varphi(x, z, w)= y- h(y) $. We first suppose that $ \gamma \in  [1, p-1) $. Then Young's inequality with exponents $ \frac{p-1}{\gamma}, \frac{p-1}{p-1-\gamma}> 1 $ gives
\[
\begin{split}
|y| & \le |y- h(y)|+ |h(y)| \\
&\le |\varphi(x, z, w)|+ \|h\|_{\infty} \\
& < f(x)+ \mu(|z|+ |w|)^{\gamma}+ \|h\|_{\infty} \\
& \le 2 f(x) +  2^{\gamma-1} \mu (|z|^{\gamma}+ |w|^{\gamma}) \\
& \le  2 f(x)+  2^{\gamma-1} \mu (\eps|z|^{p-1}+ \eps |w|^{p-1}+ K_{\eps})  \\
& \le 2 f(x)+ C_{\eps}+   2^{\gamma-1} \mu \eps(|z|^{p-1}+ |w|^{p-1}),
\end{split}
\]
where $ C_{\eps}:=   2^{\gamma-1} \mu K_{\eps} $. Hence
\[
\inf_{y \in F(x, z, w)} |y|< 2 f(x)+ C_{\eps}+ 2^{\gamma-1} \mu \eps(|z|^{p-1}+ |w|^{p-1}).
\]
If we choose $ \eps $ in such a way that
\[
\frac{2^{\gamma-1} \mu \eps}{\lambda_{1,p}}+ \frac{2^{\gamma-1} \mu \eps}{\lambda_{1,p}^{1/p}}< 1,
\]
hypothesis (h$3$) of Theorem \ref{mosconi} is fulfilled with $ a:= 2 f+ C_{\eps} \in L^{p'}(\Omega, \R^+_0) $ and $ b:= c:= 2^{\gamma-1} \mu  \eps $. 

Suppose now  $ \gamma \in (0, 1) $. Since $ (a+ b)^{\gamma} \le a^{\gamma}+ b^{\gamma} $ for every $ a, b \ge 0 $, reasoning as before yields 
\[
\vert y \vert < 2 f(x)+ \tilde C_{\eps}+ \mu \eps(\vert z \vert^{p-1}+ \vert w \vert^{p-1}),
\]
where $ \tilde C_{\eps}:= \mu K_{\eps} $. If we now choose $ \eps $ in such a way that 
\[
\frac{\mu \eps}{\lambda_{1,p}}+ \frac{\mu \eps}{\lambda_{1,p}^{1/p}}< 1,
\]
hypothesis (h$3$) of Theorem \ref{mosconi} is again fulfilled with $ a:= 2 f+ \tilde C_{\eps} \in L^{p'}(\Omega, \R^+_0) $ and $ b:= c:= \mu  \eps $. 

In both cases, there exists $ u \in W^{1,p}_0(\Omega)$ such that $ -\Delta_p u \in F(x, u, \nabla u) $, which gives a solution to equation \eqref{h}.
\end{proof}

We conclude this section considering the case when $ Y $ is a closed interval of $ \R $. Observe that here no growth conditions on $ \varphi $ are required. 

\begin{theorem}
\label{intervallo}

Let $ \varphi \colon \Omega \times \R \times \R^N \to \R $ be a Carath\'eodory function and let $ \psi\colon [\alpha, \beta] \to \R $ be continuous. Suppose that
\begin{enumerate}

\item  $\psi$ is non-constant on intervals;

\item for every $ (x, z, w)  \in \Omega \times \R \times \R^N $,  the function $ y \mapsto \varphi(x, z,w)- \psi(y) $ changes sign in $ [\alpha, \beta] $.

\end{enumerate}

Then, there exists a solution $ u \in W^{1,p}_0 (\Omega) $  to equation \eqref{equa}. 

\end{theorem}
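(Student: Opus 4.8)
The plan is to reduce the statement, exactly as in the proof of Theorem~\ref{uno}, to a single application of Theorem~\ref{mosconi}; the only — and decisive — novelty is that the boundedness of the interval $[\alpha,\beta]$ makes the growth condition (h3) automatic, which is precisely why no hypothesis on the size of $\varphi$ has to be assumed.

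First I would fix $x \in \Omega$ and invoke Theorem~\ref{sci} with $C = \R \times \R^N$, $D = [\alpha,\beta]$ — which is connected and locally connected, hence admissible — and $f(z,w,y) = \varphi(x,z,w) - \psi(y)$, setting
\[
F(x,z,w) := \{ y \in [\alpha,\beta] : \varphi(x,z,w) - \psi(y) = 0, \ y \text{ is not a local extremum point of } \psi \}.
\]
The verification of (a), (b), (c) is word for word the one carried out in Theorem~\ref{uno}, read relative to $[\alpha,\beta]$: hypothesis~(2) says that $f(z,w,\cdot)$ attains values of both strict signs on $[\alpha,\beta]$, so that — its image being a compact subinterval of $\R$ — $0 \in \text{int}(f(z,w,[\alpha,\beta]))$, which is (a); hypothesis~(1) says $\{y \in [\alpha,\beta] : f(z,w,y) \neq 0\}$ is dense in $[\alpha,\beta]$, which is (b); and since, whenever nonempty, $\{(z,w) : \varphi(x,z,w) - \psi(y') < 0 < \varphi(x,z,w) - \psi(y'')\} = \{(z,w) : \varphi(x,z,w) \in \, ]\psi(y''),\psi(y')[\,\}$ is open by continuity of $\varphi(x,\cdot,\cdot)$, the set appearing in (c) is all of $[\alpha,\beta]\times[\alpha,\beta]$. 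Hence $F(x,\cdot,\cdot)$ is lower semicontinuous with nonempty closed values. Measurability of $F$ with respect to $\mathcal L(\Omega)\otimes\mathcal B(\R\times\R^N)$ then follows, again as in Theorem~\ref{uno}, from $F^-(A) = \bigcup_{(y',y'')\in A\times A}\{(x,z,w) : \varphi(x,z,w) \in \, ]\psi(y''),\psi(y')[\,\}$ for $A$ open in $[\alpha,\beta]$, using joint measurability of the Carath\'eodory function $\varphi$ (cf. \cite[Lemma III.14]{castaing}) and reducing the union to a countable one.

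The single place where the argument genuinely departs from Theorem~\ref{uno} is (h3), and there it costs nothing: since $F(x,z,w)$ is a nonempty subset of $[\alpha,\beta]$, we have $\inf_{y \in F(x,z,w)}|y| \le \max\{|\alpha|,|\beta|\}$ for every $(x,z,w) \in \Omega \times \R \times \R^N$. Taking $a \equiv \max\{|\alpha|,|\beta|\}+1$ — a constant, hence in $L^{p'}(\Omega,\R^+_0)$ because $\Omega$ is bounded — and $b = c = 0$, we get $\frac{b}{\lambda_{1,p}} + \frac{c}{\lambda_{1,p}^{1/p}} = 0 < 1$ and $\inf_{y \in F(x,z,w)}|y| < a(x)$ throughout $\Omega \times \R \times \R^N$, i.e. (h3). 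Theorem~\ref{mosconi} then produces $u \in W^{1,p}_0(\Omega)$ with $-\Delta_p u \in F(x,u,\nabla u)$; in particular $-\Delta_p u(x) \in [\alpha,\beta]$ for a.e.\ $x$, so $\psi(-\Delta_p u)$ is meaningful, and by the very definition of $F$ we conclude $\psi(-\Delta_p u) = \varphi(x,u,\nabla u)$ in $\Omega$, that is \eqref{equa}.

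I do not expect any serious obstacle: the whole analytic difficulty has already been absorbed into Theorem~\ref{sci} and Theorem~\ref{mosconi}, and compactness of $[\alpha,\beta]$ trivializes what was the delicate growth estimate in the case $Y = \R$. The only points demanding a little care are that "empty interior'' and "changes sign'' in (1)–(2) must be understood relative to $[\alpha,\beta]$, so that conditions (a)–(b) of Theorem~\ref{sci} hold with $D = [\alpha,\beta]$, and that Theorem~\ref{mosconi} is applied to $F$ regarded as a closed-valued multifunction into $\R$ — which is legitimate, since closed subsets of $[\alpha,\beta]$ are closed in $\R$ and neither lower semicontinuity nor $\mathcal L(\Omega)\otimes\mathcal B(\R\times\R^N)$-measurability is affected by this enlargement of the codomain.
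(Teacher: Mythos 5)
Your proposal is correct and follows essentially the same route as the paper: define the same multifunction $F$ with values in $[\alpha,\beta]$, establish lower semicontinuity and measurability exactly as in Theorem~\ref{uno} (the paper dispatches this with the phrase ``a familiar argument''), and observe that $F(x,z,w)\subseteq[\alpha,\beta]$ makes hypothesis (h3) of Theorem~\ref{mosconi} automatic with a constant $a$ and $b=c=0$ (the paper takes $a\equiv 2\max\{|\alpha|,|\beta|\}$ where you take $\max\{|\alpha|,|\beta|\}+1$, an immaterial difference). Your added care about reading conditions (a)--(b) of Theorem~\ref{sci} relative to $D=[\alpha,\beta]$ and about enlarging the codomain to $\R$ before invoking Theorem~\ref{mosconi} is sound and, if anything, slightly more explicit than the paper's own write-up.
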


\begin{proof}

As before, fix $ x \in \Omega $ and for all $ (z, w) \in \R \times \R^N $ define 
\[
\begin{split}
F(x, z, w):= \{y \in [\alpha, \beta] : & \, \,  \varphi(x, z,w)- \psi(y)= 0, \\
& y \, \, \text{is not a local extremum point of} \, \, \psi(\cdot)\}.
\end{split}
\]
A familiar argument ensures that   $ F $ takes nonempty  closed values, is lower semicontinuous w.r.t. $ (z, w)$ and $ \mathcal L(\Omega) \otimes \mathcal B(\R \times \R^N) $-measurable. 

If  now $ y \in F(x,z,w) $, then $ |y| \le \max \{ \vert \alpha \vert, \vert \beta \vert \} $, and so hypothesis (h$3$) of Theorem \ref{mosconi} is immediately satisfied with $ a(x):= 2 \max \{ \vert \alpha \vert, \vert \beta \vert \} $ for every $ x \in \Omega $  and $ b:= c:= 0 $.  Therefore, there exists $ u \in W^{1,p}_0 (\Omega) $  such that $ -\Delta_p u \in F(x, u, \nabla u ) $, i.e., $ u $ is a solution to  \eqref{equa}.
\end{proof}

We now consider two applications of the  previous result, which differ by the behavior of the function  $ \psi $. In both  cases, the  boundedness of $ \varphi $ will play a central role.

\begin{corollary}

Let $ f \in L^{\infty}(\Omega) $, $ k \in \N $, $ k $ even and such that $ k \pi> \Vert f \Vert_{\infty} $, and let $ \psi\colon [-k \pi, k \pi] \to \R $ be defined by $ \psi(y)= y \cos y $. Then, there exists a solution $ u \in W^{1,p}_0(\Omega) $ to the equation
\begin{equation}
\label{appl}
\psi(-\Delta_p u)= f(x) \quad \text{in }  \Omega.
\end{equation}

\end{corollary}

\begin{proof} 

Assumption (1) is clearly satisfied. Moreover, for every $ x \in \Omega $, we have
\[
\begin{split}
& f(x)- \psi(k \pi)= f(x)- k \pi \, \cos(k \pi)= f(x)- k \pi \, (-1)^k= f(x)- k \pi< 0 \\
 \text{and} \qquad & f(x)- \psi(-k \pi)= f(x)+ k \pi \cos(-k \pi)= f(x)+ k \pi> 0,
\end{split}
\]
which gives hypothesis (2). Thanks to Theorem \ref{intervallo}, there exists at least a solution $ u \in W^{1,p}_0(\Omega) $ to equation \eqref{appl}.
\end{proof}

Note that the interval $ [\alpha, \beta] $ could  be  unbounded, as  the following example shows.

\begin{corollary}

Let $ p \in (1, \infty) $, $ f \in L^{p'}(\Omega)$,   and $ \varphi\colon \Omega \times \R \times \R^N \to \R $. Suppose that there exists $ \lambda>0 $ such that
\begin{equation}
\label{iplambda}
\sup_{(x, z, w) \in \Omega \times \R \times \R^N} \vert \varphi(x, z, w) \vert < \lambda.
\end{equation}
Then, there exists a solution $ u \in W^{1, p}_0(\Omega) $ to the equation 
\begin{equation*}
\label{eqex}
\varphi(x, u, \nabla u)- \lambda e^{\Delta_p u}+ \Delta_p u= 0.
\end{equation*}

\end{corollary}

\begin{proof}

Define $ \psi(y):= \lambda e^{-y}- y $ for every $ y \in [0, +\infty) $. Observe that hypothesis (1) is immediately satisfied. Moreover, thanks to \eqref{iplambda}, for every $ (x, z, w) \in \Omega \times \R \times \R^N $ we have  
\[
\begin{split}
& \varphi(x, z, w)- \psi(0)= \varphi(x, z, w)- \lambda< 0 \\
\text{and} \quad & \lim_{y \to +\infty} (\varphi(x, z, w)- \psi(y))= +\infty,
\end{split}
\]
that is  hypothesis (2), and hence the conclusion follows from Theorem \ref{intervallo}.
\end{proof}

\section{The discontinuous framework}

This section is  devoted to the proof of Theorem \ref{disc}, which we rewrite here, for the reader's convenience. Given $(x,z)\in S:= \Omega \times \R  $, set $\pi_0(x,z)=x$ and $\pi_1(x,z)=z $. Moreover,  fix $ p> N $ and define
\[
\mathcal{F}= \{ A \subset S: A \, \, \text{is measurable and there exists } i \in \{0,1\} \text{ such that } m(\pi_i(A))=0\}.
\]

\begin{theorem} 

\label{ultimo}

Let $ (\alpha, \beta) \subset \R $ be such that $ 0 \notin (\alpha, \beta) $,  let $\psi\colon (\alpha, \beta) \to \R $ be  continuous, and $\varphi \colon \Omega \times \R \to \R $. Suppose that

\begin{enumerate}

\item[$(i)$] $\varphi$ is $ \mathcal L(\Omega \times \R)$-measurable and essentially bounded;

\item[$(ii)$] the set $D_{\varphi} = \{ (x,z) \in S: \varphi$ is discontinuous at $(x,z) \}$  belongs to $\mathcal{F} $;

\item[$(iii)$] $\varphi^{-1}(r) \ \setminus \operatorname{int} (\varphi^{-1}(r)) \in \mathcal{F}$ for every  $r \in \psi((\alpha, \beta))$;

\item[$(iv)$] $\overline{\varphi (S \setminus D_{\varphi})} \subset \psi((\alpha, \beta))$.

\end{enumerate}

Then, there exists $u \in W^{1,p}_0(\Omega)$ such  that
\begin{equation}
\label{eq-disc}
\psi(-\Delta_p u) = \varphi (x,u) \quad \text{in }  \Omega.
\end{equation}

\end{theorem}

\begin{proof}

The first part essentially follows the proof of  \cite[Theorem 3.1]{marano4}. Thanks to assumption (i), there exists a constant $c > 0$ such that
\[
S \setminus D_{\varphi} \subset \{ (x,z) \in S: \vert \varphi(x,z) \vert \leq c \}.
\]
Set 
\[
\hat a := \min{ \overline{ \varphi(S \backslash D_{\varphi}) }} \ \ \ \text{and} \ \ \
\hat b := \max{\overline{\varphi(S \backslash D_{\varphi})}} .
\]
Thanks to hypothesis (iv) there exist $y', y'' \in (\alpha, \beta) $  such that $\psi(y') = \hat a $ and $  \psi(y'') = \hat b$. Let $\lambda\colon [0,1] \rightarrow (\alpha, \beta) $ be  a continuous function such that $\lambda(0)=y'$, $\lambda(1) = y''$. Moreover, let $\tilde \psi \colon [0,1] \to \R$ be defined by
	\[
	\tilde \psi (t):= \psi(\lambda(t)), \quad t \in [0,1].
	\]
We distinguish among two cases. 

Suppose that $ \tilde \psi $ is constant. Then $\hat a= \hat b$ and  consequently  $\varphi(S \setminus D_{\varphi}) = \{ \hat a \}$. Let $ u \in W_0^{1,p} (\Omega) $ be such that $-\Delta_p u= y'$. Since $\psi( -\Delta_p u)=\psi(y')= \hat a$,  the conclusion will be achieved by showing that the set 
	\[
	\Omega_{\varphi}:= \{ x \in \Omega: (x, u(x)) \in D_{\varphi} \}
	\]
 has measure zero. 

First of all observe that an elementary computation gives 
\begin{equation} 
\label{Omegaphi}
\Omega_{\varphi} \subset \pi_0(D_{\varphi}) \cap u^{-1} (\pi_1(D_{\varphi}))
\end{equation}
and, due to (ii), $ m(\pi_i(D_{\varphi})) = 0 $ for some $i \in \{ 0,1\}$. Suppose $ i= 0 $. From \eqref{Omegaphi} we obtain
\[
m(\Omega_{\varphi}) \le m(\pi_0(D_{\varphi}) \cap u^{-1} (\pi_1(D_{\varphi}))) \le m(\pi_0(D_{\varphi}))= 0,
\]
which implies $ m(\Omega_{\varphi})= 0 $. Let now $ i= 1 $. From \cite[Lemma 1]{buttazzo} we have $ \nabla u(x)= 0 $ a.e. in $ u^{-1}(\pi_1(D_{\varphi})) $ which in other words is 
	\begin{equation}
	\label{u-meno1}
	u^{-1}(\pi_1(D_{\varphi})) \subset \{x \in \Omega: \, \nabla u(x)= 0\}.
	\end{equation}	
Thanks to \cite[Theorem 1.1]{lou}, we have  $ y'= 0 $ on $ \{x \in \Omega: \, \nabla u(x)= 0 \} $, which in particular holds on $ u^{-1}(\pi_1(D_{\varphi})) $, taking into account \eqref{u-meno1}. Since $ y' \in (\alpha, \beta) \not\ni 0 $, this is possible if and only if $ m(u^{-1}(\pi_1(D_{\varphi})))= 0 $. From \eqref{Omegaphi} we then have 
\[
m(\Omega_{\varphi}) \le m(\pi_0(D_{\varphi}) \cap u^{-1} (\pi_1(D_{\varphi}))) \le m(u^{-1} (\pi_1(D_{\varphi}))),
\]
which implies $ m(\Omega_{\varphi})= 0 $. Hence  the thesis follows

Suppose now that $ \tilde \psi $ is non constant and choose $t_1,t_2 \in [0,1]$ such that 
\[
\tilde \psi (t_1) = \min_{t \in [0,1]}  \tilde \psi (t) \quad \text{as well as} \quad \tilde \psi (t_2) = \max_{t \in [0,1]} \tilde \psi (t).
\]
Obviously, $t_1 \neq t_2$ and there is no loss of generality in assuming $t_1 < t_2$. Let $h\colon \tilde \psi ([0,1]) \to [0,1]$ be defined by 
	\[
	h(r) = \min{(\tilde \psi^{-1}(r) \cap [t_1,t_2])}, \quad \forall \, r \in \tilde \psi ([0,1]).
	\] 
We claim that $h$ is strictly increasing. Indeed, let $r_1, r_2 \in \tilde \psi ([0,1])$ be such that $r_1 < r_2$. Then, $h(r_1) \neq h(r_2)$ and $t_1 < h(r_2)$. Taking into account that $ \tilde \psi (h(r_2))=r_2 > r_1$, $ \tilde \psi (t_1) \le r_1$, and the continuity of $ \tilde \psi $, we immediately infer $h(r_1) < h(r_2)$. 

Therefore, the family $D_k$ of all discontinuity points of the function $k\colon \mathbb{R} \rightarrow (\alpha, \beta)$ given by
\[
k(r)=\begin{cases}
 \ \lambda (h(\tilde{\psi}(t_1))) \quad & \, \text{if} \, \, \, r \in  (-\infty, \tilde{\psi}(t_1)) \\
 \ \lambda (h(r)) \quad & \, \text{if} \, \, \, r \in \tilde{\psi}([0,1]) \\
 \ \lambda (h(\tilde{\psi}(t_2))) \quad & \, \text{if} \, \, \, r \in (\tilde{\psi}(t_2),+\infty)
\end{cases}
\]
is at most countable. Owing to hypotheses (ii) and (iii), this implies that the set

\begin{equation}
\label{D}
D = D_{\varphi} \cup \l\{ \bigcup_{r \in D_k} \bigl[\varphi^{-1}(r) \backslash \operatorname{int}(\varphi^{-1}(r)) \bigr] \r\}
\end{equation}
has measure zero.

Define now $f\colon S \to \R $ by $ f(x, z):= k(\varphi(x,z))$. Since $f(S) \subset \lambda([0,1])$ it follows that $f$ is bounded. Moreover, arguing as in \cite[Theorem 3.1]{marano4} gives that $ f $ is continuous.  Set now
\begin{equation*}
F(x,z) := \overline{\operatorname{co}} \left( \bigcap_{\delta > 0 } \bigcap_{ E \in \mathcal{E} } \overline{f(B_{\delta}(x,z) \setminus E)} \right),
\end{equation*}
where
\[
\begin{split}
\mathcal{E} &= \{ E \subset S : m(E)= 0 \} \\
\text{and} \qquad B_{\delta}(x,z) &= \{ (x',z') \in S: \vert x-x' \vert + \vert z - z' \vert \leq \delta \}.
\end{split}
\]
A standard argument (see, e.g,  \cite[Theorem 3.1]{marano4}), ensures that $F$ is upper semicontinuous, with nonempty, convex, and closed values. Furthermore, $ F(\cdot \,, z) $ is measurable for every $ z \in \R $, $ F(x, \cdot) $ has a closed graph for almost all $ x \in \Omega $, and it holds
\begin{equation*}
F(x,z) = \{ f(x,z) \} \quad \text{as soon as }  (x,z) \in S \setminus D.
\end{equation*}
Consider now the problem 
\begin{equation}
\label{prob}
-\Delta_p u \in F(x, u)  \, \,  \, \text{in} \, \, \Omega, \quad u \in W^{1,p}_0(\Omega).
\end{equation}
We want to show existence of solutions to \eqref{prob} by means of Theorem \ref{plap}. To this end, let us verify  hypotheses ($i_1$)-($i_4$). If $ A_p $ is the operator given in \eqref{operator}, we choose
		\[
		U:=A_p^{-1}(L^{p'}(\Omega)), \quad \Phi(u):=u \quad \text{and} \quad \Psi(u):=A_p(u),
		\]
 for every $u \in U$. Observe in particular that $ A_p\colon U \rightarrow L^{p'}(\Omega)$ is bijective. 

Let $v_h \rightharpoonup v$ in $L^{p'}(\Omega)$. Since $ \{v_h\} $ is bounded in $ L^{p'}(\Omega) $, and $ L^{p'}(\Omega) $ compactly embeds in $ W^{-1,p'}(\Omega)$, there exists a subsequence, still denoted by $ \{v_h\} $, such that $ v_h \to v $ in $ W^{-1,p'}(\Omega) $. Property ($p_2$) implies that $ A_p^{-1} $ is strongly continuous, and therefore  $A_p^{-1}(v_h) \to A_p^{-1}(v)$ almost everywhere in $\Omega$. 

Let now $ g\colon \R^+_0 \to \R^+_0 $ be defined by
\[
g(t) := a(bt)^{1/(p-1)} \quad \forall \, t \in \R^+_0,
\]
where the constants $ a $ and $ b $ come from  inequalities \eqref{1dis}-\eqref{2dis}. Note in particular that \eqref{1dis} holds true, since by assumption $p>N$. Clearly, $ g $ is monotone increasing in $ \R^+_0 $. Moreover, fix $ u \in U $. Then  property ($p_3$) gives
\[ 
\Vert u \Vert_{\infty} \leq a \Vert u \Vert = a \Vert A_p(u)\Vert_{W^{-1,p'}(\Omega)}^{1/(p-1)}\leq a(b\Vert A_p(u)\Vert_{p'})^{1/(p-1)} = g (\Vert A_p(u) \Vert_{p'}).
\]
This shows ($i_1$). Since hypotheses ($i_2$) and ($i_3$) are already satisfied, we have only to check ($i_4$). Define, for every $ x \in \Omega $,  
\[
\rho(x):= \sup_{|z| \le g(r)} d(0,F(x, z)). 
\]
Reasoning as in  \cite[Theorem 3.1]{marano3}, we  see that $ \Vert \rho \Vert_{p'} \le r $ once the same property holds true for the function $ x \mapsto j(x):= \sup_{|z| \le g(r)} \vert f(x, z) \vert $.


If $ \vert z \vert \le g(r) $, then
\[
\int_{\Omega} \vert f(x, z) \vert^{p'} dx \le m(\Omega) \Vert f(\cdot \,, z) \Vert_{\infty}^{p'} \, ,
\]
whence
\[
\begin{split}
\int_{\Omega} \vert j(x) \vert^{p'} dx&= \int_{\Omega} \biggl( \sup_{|z| \le g(r)} \vert f(x, z) \vert \biggr)^{p'} dx \le m(\Omega) \Vert f(\cdot \,, z) \Vert_{\infty}^{p'}.
\end{split}
\]
Choosing  $ r \ge m(\Omega)^{1/p'} \Vert f(\cdot \,, z) \Vert_{\infty}  $ gives $ j \in L^{p'}(\Omega) $ and $ \Vert j \Vert_{p'} \le r $, and hence hypothesis ($i_4$) is satisfied.

Thanks to Theorem \ref{plap} there exists $u \in U \subset W_0^{1,p}(\Omega)$ such that
\begin{equation}
\label{incl}
-\Delta_p u(x) \in F(x, u(x)) \quad \text{a.e. in }  \Omega
\end{equation}
and $ \vert \Delta_p u(x) \vert \le \rho(x) $ for almost every $ x \in \Omega $. 
Define $\Omega_f := \{ x \in \Omega: (x,u(x)) \in D \}$. From \eqref{D} it follows that
\[
\begin{split}
\Omega_f \subset & \, \bigl\{\pi_0(D_{\varphi}) \cap u^{-1}(\pi_1(D_{\varphi})) \bigr\} \\ 
& \quad  \cup \biggr \{\bigcup_{r \in D_k} \bigl[\pi_0(\varphi^{-1}(r) \setminus \text{int} (\varphi^{-1}(r)))\cap u^{-1} (\pi_1(\varphi^{-1}(r) \setminus \text{int} (\varphi^{-1}(r)))) \bigr] \biggr\},
\end{split}
\]
which, in particular, implies that
\[
\begin{split} m(\Omega_f) & \le m \l(\pi_0(D_{\varphi}) \cap u^{-1}(\pi_1(D_{\varphi}))\r) \\ 
& \qquad + m\l(\bigcup_{r \in D_k} [\pi_0(\varphi^{-1}(r) \setminus \text{int} (\varphi^{-1}(r)))\cap u^{-1} (\pi_1(\varphi^{-1}(r) \setminus \text{int} (\varphi^{-1}(r))))]\r) \\
& \le m \l(\pi_0(D_{\varphi}) \cap u^{-1}(\pi_1(D_{\varphi}))\r) \\ 
& \qquad + \bigcup_{r \in D_k} m\l([\pi_0(\varphi^{-1}(r) \setminus \text{int} (\varphi^{-1}(r)))\cap u^{-1} (\pi_1(\varphi^{-1}(r) \setminus \text{int} (\varphi^{-1}(r))))]\r).
\end{split}
\]
Assumption (ii) entails $ m(\pi_i(D_{\varphi}))= 0 $ for some $ i \in \{0, 1\} $. Likewise, due to (iii), for each $ r \in D_k $, there exists $ i_r \in \{0, 1\} $ such that $ m(\pi_{i_r}(\varphi^{-1}(r) \setminus \operatorname{int}(\varphi^{-1}(r))))= 0 $. Reasoning like in the case when $ \tilde \psi $ is constant gives $ m(\Omega_f)= 0 $. This implies $F(x,u(x)) = \{ f(x,u(x)) \}$ and   on account  of \eqref{incl} it follows that
\[
-\Delta_p u(x)= f(x,u(x)) \quad \text{a.e. in }  \Omega. 
\]
We then have
\[
\psi(-\Delta_p u(x))= \psi (f(x, u(x))) = \psi (k (\varphi (x, u(x)))) = \varphi (x, u(x)),
\]
which completes the proof.
\end{proof}

\begin{remark}
Hypothesis (iv) and the assumption $ 0 \notin (\alpha, \beta) $ are \emph{essential} to obtain the existence of a solution for equations as in \eqref{eq-disc}. Below we consider two situations:
apparently they are very similar, but one of them admits a solution while the other one doesn't. 
\end{remark}

\begin{example}

Let $ \varphi \colon \R \to \R $ be defined by
\[
\varphi(z)=
\begin{cases}
0 \, \,  & \text{if } \, z \ne 0 \\
1 & \text{if } \, z= 0.
\end{cases}
\]
and let $ \psi\colon [1, +\infty) \to \R $ be such that $ \psi(y)= y $. Consider the following equation
\begin{equation}
\label{ex}
-\Delta_p u= \varphi(u).
\end{equation}
Equation \eqref{ex} doesn't have any solution in $ W^{1,p}_0(\Omega) $. Suppose on the contrary  that $ u $ is such a solution. Since $ \varphi(u) \ge 0 $, then from  \eqref{ex} we have  $ -\Delta_p u \ge 0 $, and  the Strong Maximum Principle implies that $ u \equiv 0 $ or $ u> 0 $. If $ u \equiv 0 $, then this would imply that $ -\Delta_p u \equiv 0 $, which is in contrast with \eqref{ex}. Suppose now that $ u> 0 $. Then,  the definition of $ \varphi $ implies $ -\Delta_p u= 0 $. This fact, together with the boundary condition $ u  |_{\partial \Omega}= 0 $, implies $ u \equiv 0 $ which is again impossible. 

Observe also that such $\varphi $ is incompatible with the hypotheses of Theorem \ref{ultimo}, because in this case hypothesis (iv) and the condition $ 0 \notin (\alpha, \beta) $ cannot be verified simultaneously.


\medskip

Fix now $ \lambda \in (0, 1) $ and consider the function $ \tilde \varphi \colon \R \to \R $  defined by
\[
\tilde \varphi(z)=
\begin{cases}
1 \, \,  & \text{if } \, z \ne 0 \\
\lambda & \text{if } \, z= 0.
\end{cases}
\]
In this case both hypothesis (iv) and $ 0 \notin [1, +\infty) $ are verified, since 
\[
\{1\}= \overline{\tilde \varphi(\R \setminus \{0\})} \subset \psi([1, +\infty))= [1, +\infty).
\]
Therefore, Theorem \ref{ultimo} gives the existence of a solution $u \in W^{1,p}_0(\Om)$  to \eqref{ex}.

\end{example}

\end{document}